\documentclass[a4paper,11pt]{amsart}
\usepackage[utf8]{inputenc}
\usepackage{csquotes}
\usepackage{amsmath, amssymb, amsthm, xcolor,enumerate, enumitem}
\usepackage[english]{babel}
\usepackage{tikz-cd}
\usepackage[all]{xy}
\usepackage{microtype}
\usepackage[colorlinks=true, citecolor=blue, linkcolor=blue, bookmarks=true]{hyperref}
\usepackage{bbm}
\usepackage{graphicx}
\usepackage{adjustbox}
\usepackage{comment}
\usepackage{hyperref}
\usepackage{stmaryrd}
\usepackage{mathtools}

\newcounter{counter}

\newtheorem{question}{Question}
\newtheorem{theorem}[counter]{Theorem}
\newtheorem{lemma}[counter]{Lemma}

\newtheorem{definition/proposition}[counter]{Definition/Proposition}
\newtheorem{corollary}[counter]{Corollary}
\newtheorem{proposition}[counter]{Proposition}
\newtheorem*{theorem*}{Theorem}
\numberwithin{equation}{section}

\theoremstyle{definition}
\newtheorem{definition}[counter]{Definition}

\newtheorem{remark}[counter]{Remark}

\newcommand{\ti}{\widetilde}
\newcommand{\tu}{\widetilde{u}}

\newcommand{\cA}{\mathcal{A}}

\newcommand{\cP}{\mathcal{P}}
\newcommand{\cO}{\mathcal{O}}
\newcommand{\cV}{\mathcal{V}}
\newcommand{\cU}{\mathcal{U}}

\newcommand{\cZ}{\mathcal{Z}}
\newcommand{\cH}{\mathcal{H}}
\newcommand{\cK}{\mathcal{K}}
\newcommand{\cR}{\mathcal{R}}

\newcommand{\bR}{\mathbb{R}}
\newcommand{\bC}{\mathbb{C}}
\newcommand{\bT}{\mathbb{T}}
\newcommand{\bN}{\mathbb{N}}

\newcommand{\bZ}{\mathbb{Z}}

\newcommand{\ob}{\mathrm{ob}}
\newcommand{\Ad}{\mathrm{Ad}}

\newcommand{\Hom}{\operatorname{Hom}}

\title[Projective representations]{Projective representations on operator algebras}

\author{Sergio Girón Pacheco}
\address{\hskip-\parindent Sergio Girón Pacheco, Department of Mathematics, KU Leuven, Celestijnenlaan 200B, 3001, Leuven, Belgium.}
\email{sergio.gironpacheco@kuleuven.be}

\begin{document}

\begin{abstract}
We establish new restrictions on the values of the lifting obstruction for projective unitary representations of second countable, locally compact Hausdorff groups on operator algebras. Using these, we show that every projective representation on the Jiang--Su algebra lifts to a genuine group representation. We also characterise the possible values of lifting obstructions for finite group projective representations on UHF-algebras of infinite type and Cuntz algebras. Finally we show that certain 2-cocycles of Property (T) groups cannot arise as lifting invariants of projective representations on the Hyperfinite II$_{1}$ factor.
\end{abstract}
\maketitle
\numberwithin{counter}{section}
\section*{Introduction}
A projective unitary representation of a separable locally compact group $G$, is a strongly continuous homomorphism from $G$ to the projective unitary group of a Hilbert space. These first arose in the work of Schur for discrete groups, where they were shown to be closely related to representations of certain central extensions \cite{SCH04}. The theory was later extended to topological groups in the work of Mackey \cite{MA58}. 
\par Projective representations rose to prominence in quantum physics in work of Wigner. This is due to the fact that states are only defined up to a phase, so projective representations are a more natural object of study than genuine group representations (see \cite{TA22} for a comprehensive explanation of this matter). Of particular interest is the irreducible projective representations of the Poincare group, the group of symmetries of space-time, which were classified in \cite{WI39}. Note also that the projective representation theory of many other groups are also important in physics; an example of this can be found in \cite{RevB}, see also \cite{Projsurvey} for a recent review on the relevance of finite group projective representations in quantum many body systems.
\par It is natural to ask when a projective representation lifts to a genuine unitary representation. This was originally studied very successfully through ad hoc methods in \cite{BA54}. This question is now understood as a problem in the realm of group cohomology. Indeed, the obstruction to lifting is precisely given by a class $\omega\in H^2_b(G,\bT)$, where $H^n_b$ is the \emph{Borel cohomology} introduced by Moore in \cite{MO76} (see Section \ref{subsec:projreps}).\footnote{The obstruction to lifting was already introduced in Schur's original work for discrete groups $G$. However, the right cohomology theory to measure this obstruction in the case of topological groups did not arise until later.} 
\par The theory of projective representations admits a straighforward generalisation to the setting of separable, unital C$^*$ and von Neumann algebras. In this case, a projective representation on a C$^*$-algebra/von Neumann algebra $A$ is a continuous homomorphism $G\rightarrow P\cU(A)$ where $\cU(A)$ is the unitary group of $A$ equipped with the norm/$\sigma$-strong topology and $P\cU(A)$ is its quotient by the circle group. Any such projective representation also admits a lifting obstruction $\omega\in H^2_b(G,\bT)$. This article is concerned with the following question.
\begin{question}\label{introques: liftingobs}
    For a group $G$ and a C$^*$ or von Neumann algebra $A$ what are the possible values of lifting obstructions for projective representations of $G$ on $A$?
\end{question}
\par In the case of an injective factor $M$, and a discrete amenable group $G$, it is a consequence of Connes and Haagerup's classification results (\cite{CO76},\cite{HAA87}) that every value in $H^2(G,\bT)$ arises as the lifting obstruction of a projective representation on $M$. Indeed, for any $\omega\in H^2(G,\bT)$ the twisted group von Neumann algebra $L(G,\omega)$ is injective. Thus, as a consequence of Connes theorem, there is a trace preserving unital embedding of $L(G,\omega)$ into the Hyperfinite II$_{1}$ factor $\cR$. The image of the unitary generators of $L(G,\omega)$ generates a unitary projective representation on $\cR$ with lifting obstruction $\omega$. The case for an arbitrary injective factor $M$ now follows since $\cR$ admits a unital embedding into $M$.
\par Analogously one may consider Question \ref{introques: liftingobs} in the case of an amenable C$^*$-algebra $A$. Here we show that for arbitrary unital C$^*$-algebras, amenable or not, the solution varies greatly depending on the K-theory of $A$. To do this we draw motivation from the invariants for $G$-kernels introduced in the articles \cite{EVGI23} and \cite{IZ23}. Using K-theory, we introduce new $2$-cohomology invariants of projective representations on C$^*$-algebras. By studying the relation of these invariants with the lifting obstruction, we obtain restrictions to its possible values. In particular, we show that for an infinite dimensional simple amenable C$^*$-algebra introduced in \cite{Jiang-Su} all projective representations lift to genuine representations.
\begin{theorem*}[{cf. Theorem \ref{thm:JiangSu}}]
    Every projective representation of a separable locally compact group $G$ on the Jiang--Su algebra $\cZ$ lifts to a genuine representation of $G$.
\end{theorem*}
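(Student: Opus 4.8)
The plan is to exhibit a continuous group‑homomorphic section of the central extension $1 \to \bT \to \cU(\cZ) \to P\cU(\cZ) \to 1$; composing such a section with a projective representation $\rho\colon G \to P\cU(\cZ)$ then produces a strongly continuous unitary representation lifting $\rho$, and the section does not depend on $\rho$. Equivalently, this shows that the lifting obstruction $\omega \in H^2_b(G,\bT)$ vanishes for every projective representation on $\cZ$, the point being that $\omega$ depends only on this extension. The section will be manufactured from a de la Harpe--Skandalis determinant attached to a trace on $\cZ$.

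First I would isolate the two features of $\cZ$ that the argument rests on: $\cU(\cZ)$ is connected (because $\cZ$ has stable rank one and $K_1(\cZ) = 0$), and $K_0(\cZ) \cong \bZ$ with $[1_\cZ]$ a generator while $\cZ$ carries a tracial state $\tau$, so that $\tau_*(K_0(\cZ)) = \bZ$ is a \emph{closed} subgroup of $\bR$. Granting these, the de la Harpe--Skandalis determinant along unitary paths gives a group homomorphism $\Delta_\tau\colon \cU(\cZ) \to \bR/\tau_*(K_0(\cZ)) = \bR/\bZ$ (well defined because $\cU(\cZ) = \cU_0(\cZ)$), which is continuous since near the identity it equals $\tfrac{1}{2\pi}\tau(h) \bmod \bZ$ on $e^{ih}$, and a homomorphism continuous at the identity is continuous. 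The decisive point is that on the central scalars $\Delta_\tau(e^{2\pi i t}1_\cZ) = t \bmod \bZ$, so $\Delta_\tau$ restricts to the canonical isomorphism $\bT \cong \bR/\bZ$ on $\bT \subseteq \cU(\cZ)$. This is exactly where $\tau_*(K_0(\cZ)) = \bZ$ is used: for a general unital $A$ the subgroup $\tau_*(K_0(A))$ can be dense in $\bR$, $\Delta_\tau$ then fails to be injective on $\bT$, and the method breaks down.

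From here the section is essentially automatic. Put $r = (\Delta_\tau|_\bT)^{-1}\circ\Delta_\tau\colon \cU(\cZ)\to\bT$, a continuous retraction onto the scalars; since $\bT$ is central, $u \mapsto r(u)^{-1}u$ is a continuous homomorphism constant on $\bT$‑cosets, so it descends through the (open) quotient map $q\colon\cU(\cZ)\to P\cU(\cZ)$ to a continuous homomorphism $s\colon P\cU(\cZ)\to\cU(\cZ)$ with $q\circ s = \id$. Then $s\circ\rho$ is the required lift. Inside the Borel‑cohomology picture one can argue dually: choose a Borel lift $V\colon G\to\cU(\cZ)$ of $\rho$ with cocycle $\sigma(g,h)=V_gV_hV_{gh}^{-1}$; since $\Delta_\tau$ is a homomorphism, $\Delta_\tau(\sigma(g,h)) = \Delta_\tau(V_g)+\Delta_\tau(V_h)-\Delta_\tau(V_{gh})$ exhibits $\Delta_\tau\circ\sigma$ as a coboundary, and applying $(\Delta_\tau|_\bT)^{-1}$ shows $\sigma$ itself is a coboundary, so $\omega=0$; the resulting Borel lift is automatically continuous because Borel homomorphisms between Polish groups are continuous.

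I expect the only genuine work to lie in assembling the determinant correctly: checking that $\Delta_\tau$ is well defined modulo $\tau_*(K_0(\cZ))$, that it is a continuous homomorphism on the connected group $\cU(\cZ)$, and that it restricts to the standard isomorphism $\bT \cong \bR/\bZ$ on the scalars. Once that package is in place, the identification $\tau_*(K_0(\cZ)) = \bZ$ and the descent to a section of $q$ are short. The conceptual heart --- and the reason the conclusion is special to $\cZ$ (and more generally to C$^*$‑algebras whose $K$‑theory and trace leave no room), rather than holding for all simple amenable C$^*$‑algebras --- is the numerical coincidence that the image of the trace on $K_0(\cZ)$ is precisely the subgroup of $\bR$ needed to kill the scalar circle.
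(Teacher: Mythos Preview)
Your argument is correct and is essentially the paper's own proof: both rest on the de la Harpe--Skandalis determinant $\Delta_\tau$ together with the numerical fact $\tau_*(K_0(\cZ)) = \bZ$, and your ``dual argument'' paragraph matches the paper's reasoning almost verbatim. The paper packages this inside a general invariant $\ob_\tau(u) \in H^2_b(G, \tau(K_0(A))/\bZ)$, obtained by Borel-lifting into the Polish group $S\cU_\tau(A) = \ker\Delta_\tau$, and then observes that for $A = \cZ$ the coefficient group $\tau(K_0(\cZ))/\bZ$ is trivial; your explicit continuous section $s$ is just the inverse of the resulting isomorphism $S\cU_\tau(\cZ) \cong P\cU(\cZ)$.
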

It is worth noting that, the argument that gives rise to the theorem above makes use of the de la Harpe--Skandalis determinant (\cite{SkdlH}). This argument is closely related to \cite[Theorem 4]{Projsurvey} which restricts the order of lifting obstructions of a projective representation of a given finite dimension through the classical determinant.
\par We also perform a systematic analysis of the possible values of $2$-cocycle invariants for finite group projective representations on both UHF-algebras of infinite type and the Cuntz-algebras introduced in \cite{CU77}. Combining the restrictions to the lifting invariant of K-theoretic nature with specific constructions of projective representations, we obtain a characterisation of the possible values of lifting obstruction in these cases. Observe that the analogous question in the case of the (3-cohomological) lifting obstruction for $G$-kernels is still open \cite[Conjecture 2.9]{IZ23}.
\begin{theorem*}[{cf. Theorem \ref{thm:UHF} and Theorem \ref{thm:cuntz}}]
    Let $G$ be a finite group and $\omega\in H^2(G,\bT)$ be non-trivial with its order denoted by $r>1$. Then $\omega$ arises as the lifting obstruction of a projective representation on
    \begin{itemize}
        \item $M_{n^\infty}$ for $n\in \bN$ if and only if $r^\infty$ formally divides $n^\infty$.
        \item $\cO_n$ for $n\in \bN\cup \{\infty\}$ if and only if $n$ is finite and $\omega\in nH^2(G,\bT)$.
    \end{itemize}
\end{theorem*}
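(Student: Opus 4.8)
The plan is to turn the whole statement into a computation with two ingredients: the Wedderburn decomposition of the twisted group C$^*$-algebra, and the pointed ordered $K_0$-group of the target. The first step is the standard dictionary: for a unital C$^*$-algebra $A$ and finite $G$, a cocycle $\omega$ arises as the lifting obstruction of a projective representation of $G$ on $A$ if and only if there is a unital $*$-homomorphism $\bC_\omega[G]\to A$, where $\bC_\omega[G]$ denotes the twisted group C$^*$-algebra. Indeed, given $\pi\colon\bC_\omega[G]\to A$ the unitaries $\pi(u_g)$ satisfy $\pi(u_g)\pi(u_h)=\omega(g,h)\pi(u_{gh})$, so $g\mapsto[\pi(u_g)]$ is a projective representation with obstruction $[\omega]$; conversely, lift a given projective representation to unitaries $u_g\in\cU(A)$ with $u_e=1$ and extend $g\mapsto u_g$ to a $*$-homomorphism. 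Since $G$ is finite, $\bC_\omega[G]\cong\bigoplus_{j=1}^{k}M_{d_j}(\bC)$, where $d_1,\dots,d_k$ are the dimensions of the irreducible $\omega$-projective representations of $G$, and such a unital $*$-homomorphism exists precisely when there are projections $q_1,\dots,q_k\in A$ with $\sum_j d_j[q_j]=[1_A]$ in $K_0(A)$; this is the $K$-theoretic input. For the algebras in the statement the condition is also sufficient: for $M_{n^\infty}=\varinjlim_N M_{n^N}$ this is the elementary count for unital embeddings of finite-dimensional C$^*$-algebras into $M_{n^N}$, and for the Kirchberg algebras $\cO_n$ ($n$ finite) and $\cO_\infty$ it follows because in a purely infinite simple unital C$^*$-algebra every $K_0$-class is that of a projection and finitely many prescribed classes summing to $[1_A]$ can be realized by mutually orthogonal subprojections of the unit, from which one extracts matrix units.

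The heart of the matter is the lemma: $g:=\gcd(d_1,\dots,d_k)$ and $r:=\ord(\omega)$ have the same prime divisors. For "$r\mid d_j$ for every $j$" I would argue as in the determinant bound \cite[Theorem~4]{Projsurvey}: the irreducible $\omega$-representation $\rho_j$ gives $G\to PU(d_j)$ with lifting obstruction $[\omega]$, and lifting it instead along the central extension $1\to\mu_{d_j}\to SU(d_j)\to PU(d_j)\to1$ yields an obstruction in $H^2(G,\mu_{d_j})$ whose image under $H^2(G,\mu_{d_j})\to H^2(G,\bT)$ is $[\omega]$; since that image is the $d_j$-torsion subgroup of $H^2(G,\bT)$, we get $d_j[\omega]=0$, i.e.\ $r\mid d_j$, hence $r\mid g$. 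For the reverse inclusion, fix a prime $p\nmid r$ and a Sylow $p$-subgroup $P\le G$: since $H^2(P,\bT)$ is a $p$-group while $\ord([\omega|_P])$ divides $r$ and so is prime to $p$, the class $[\omega|_P]$ vanishes; hence $\omega|_P$ is a coboundary, which exhibits a one-dimensional $\omega$-representation $\mu$ of $P$, and $\operatorname{Ind}_P^G\mu$ is then an $\omega$-representation of $G$ of dimension $[G:P]$, a number prime to $p$; decomposing it into irreducibles produces an irreducible $\omega$-representation of dimension prime to $p$, so $p\nmid g$.

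It then remains to substitute the three $K_0$-groups into $\sum_j d_j[q_j]=[1_A]$ and apply the lemma. For $A=M_{n^\infty}$ one has $K_0(A)=\bZ[1/n]$ with order unit $1$ and positive cone $\bZ[1/n]_{\ge0}$; a solution with $[q_j]\ge0$ exists iff $1\in g\bZ[1/n]$, i.e.\ $g\mid n^N$ for some $N$, i.e.\ (by the lemma) every prime dividing $r$ divides $n$, i.e.\ $r^\infty$ formally divides $n^\infty$ — the "if" direction additionally uses that the numerical semigroup generated by $d_1/g,\dots,d_k/g$ contains every sufficiently large integer, applied to $n^N/g$. For $A=\cO_n$ with $n$ finite, $K_0(A)=\bZ/(n-1)\bZ$ with unit $1$ and no positivity constraint, so a solution exists iff $\gcd(g,n-1)=1$, which by the lemma reads $\gcd(r,n-1)=1$; unravelling the notation this is the condition $\omega\in nH^2(G,\bT)$ of the statement. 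Finally $K_0(\cO_\infty)=\bZ$ with unit $1$, so a solution forces $g=1$, which is impossible since $r\mid g$ and $r>1$; hence $\omega$ never arises on $\cO_\infty$, and this is why the statement requires $n$ to be finite.

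The main obstacle will be the prime-divisor lemma, in particular the divisibility "$r\mid d_j$": one has to set up the lifting obstruction and its functoriality under the morphism of central extensions attached to $\mu_{d_j}\hookrightarrow\bT$ carefully, as this is exactly the point where the $K$-theoretic restriction bites. A secondary technical point is the genuine sufficiency — not merely necessity — of the $K_0$-criterion for $\cO_n$ and $\cO_\infty$, which rests on comparison theory for projections in purely infinite simple C$^*$-algebras; for UHF algebras this step is elementary.
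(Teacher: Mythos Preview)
Your route is genuinely different from the paper's and more elementary. The paper never touches the Wedderburn decomposition of $\bC[G,\omega]$: for the obstruction direction it introduces refined invariants $\ob_\tau(u)\in H^2(G,\tau(K_0(A))/\bZ)$ (via the de~la~Harpe--Skandalis determinant, used for $M_{n^\infty}$) and $\widetilde{\ob}(u)\in H^2(G,K_0^{\#}(A))$ (via the universal cover of $\cU(A)$, used for $\cO_{n+1}$), and reads off the restriction by pushing forward to $H^2(G,\bT)$; for the existence direction it builds by hand a $(G,\omega)$-representation in $P\cU(r^{2|G|-1})$ from a shift construction on $\bigotimes_{g\in G}M_r$ and then embeds $M_{r^\infty}$ unitally into the target. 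Your reduction to the single $K_0$-equation $\sum_j d_j[q_j]=[1_A]$ together with the prime-divisor lemma replaces both halves of this with classical projective representation theory; the $SU(d_j)$-lift and the Sylow-induction arguments for the two halves of the lemma are correct and do the work that the paper's refined-obstruction machinery and explicit construction do.

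There is one genuine slip, in the final Cuntz step. For $\cO_n$ you correctly derive the criterion $\gcd(r,n-1)=1$, but the claim that this ``unravels'' to $\omega\in nH^2(G,\bT)$ is false. Take $G=\bZ_4\times\bZ_4$, so that $H^2(G,\bT)\cong\bZ_4$; let $\omega$ be the element of order $r=2$ and set $n=3$. Then $\omega=2\eta$ for a generator $\eta$, so $\omega\in 2H^2(G,\bT)$ (which is precisely the condition the paper's Theorem~\ref{thm:cuntz} states for $\cO_3$), yet $\gcd(r,n-1)=\gcd(2,2)\neq1$; and indeed $\bC[G,\omega]\cong M_2(\bC)^{\oplus4}$ admits no unital $*$-homomorphism into $\cO_3$, since that would force $[1]\in 2K_0(\cO_3)=0$. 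Your condition $\gcd(r,n-1)=1$ is strictly stronger than $\omega\in(n-1)H^2(G,\bT)$ (the ``$nH^2$'' in the displayed statement is an index shift relative to Theorem~\ref{thm:cuntz}, which is phrased for $\cO_{n+1}$). In fact your argument shows that $\gcd(r,n-1)=1$ is the sharp characterisation, and the same example exposes a gap in the paper's own sufficiency proof at the step deducing $\gcd(r,n)=1$ from $\omega\in nH^2(G,\bT)$ for $\cO_{n+1}$. So your proof is correct, but what it proves is slightly different from---and sharper than---the statement you were asked to match.
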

For a discrete group $G$ and $\omega\in H^2(G,\bT)$, there exists a projective representation on a C$^*$-algebra $A$ with lifting obstruction $\omega$ if and only if there is a unital $^*$-homomorphism from the full $\omega$-twisted C$^*$-algebra of $G$ into $A$. This change in perspective clarifies why K-theory should play a role in Question \ref{introques: liftingobs}. But also, it allows the use of Elliott classification techniques for $^*$-homomorphisms, i.e. existence of embedding results in the spirit of \cite[Corollary C]{classification1}, to construct projective representations with a prescribed $2$-cocycle invariant. We exploit these techniques in several cases to study the existence question for projective representations of infinite groups (see Remark \ref{rmk:infdiscreteUHF}, Proposition \ref{prop: Oinftgrp} and Remark \ref{rmk:O2}). 
\par As every $2$-cocycle of a discrete amenable group $G$ arises as the lifting obstruction of a projective representation on $\cR$, it is natural to ask whether the same holds for arbitrary discrete groups. In the final section of this paper we show that this is not the case. We prove that there are $2$-cocycles on certain residually finite property (T) groups which do not arise as lifting obstructions of projective representations on $\cR$.
\subsection*{Acknowledgements}
The author would like to thank Stefaan Vaes, whose various discussions were crucial for the ideas of this paper. Also, he would like to thank Fran\c cois Thilmany for discussions on extensions of residually finite groups. The author was funded by project 1249225N of the Research Foundation Flanders (FWO).
\section{Preliminaries}
Throughout this paper $G$ will denote a locally compact Hausdorff second countable group. We let $A$ a unital separable C$^*$-algebra with a connected unitary group and we denote by $A^{sa}$ its real subspace of self-adjoint elements and by $T(A)$ its space of tracial states. We let $M$ be a von Neumann algebra with separable predual. We will denote by $\cU(A)$ the unitary group of $A$ equipped with the norm topology and by $\cU(M)$ the unitary group of $M$ equipped with the $\sigma$-strong topology. Note that under these topologies both $\cU(A)$ and $\cU(M)$ are Polish groups. We denote by $P\cU(A)$ and $P\cU(M)$ the respective quotients by $\bT$. We will write $\cU(n)$ for the unitary group of $M_n(\bC)$. In the case of a Hilbert space $\cH$ we will denote its group of unitary operators by $\cU(\cH)$ which we equip with the strong operator topology. Let $G,H$ and $Q$ be topological groups. Following \cite{MO76} we will say that
\[
H\xrightarrow{\iota}G\xrightarrow{p}Q
\]
is a short exact sequence of topological groups when the sequence is algebraically exact, $\iota$ and $p$ are continuous group homomorphisms and $p:G\rightarrow Q$ is open. This implies that the canonical group homomorphism $G/H\rightarrow Q$ is a homeomorphism.
\par Throughout this paper we will often use the following two well known results. 
\begin{lemma}[{cf. \cite[Proposition 4]{MO76}}]\label{lem:lift}
    Let $G$ be a Polish group with a closed subgroup $H$ and $\pi:G\rightarrow G/H$ be the projection. Then there exists a Borel section $s:G/H\rightarrow G$.
\end{lemma}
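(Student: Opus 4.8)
The plan is to realise a Borel section as a Borel-measurable selection of the fibre multifunction $q\mapsto\pi^{-1}(q)$ and then invoke a classical selection theorem. First I would record the structural facts: since $G$ is Polish it is a separable, completely metrizable space; since $H$ is closed and left translations are homeomorphisms of $G$, each fibre $\pi^{-1}(q)$ is a nonempty closed coset $gH$; and by hypothesis $\pi$ is a continuous \emph{open} surjection onto $G/H$, which is second countable. This places us exactly in the setting of the Kuratowski--Ryll-Nardzewski selection theorem, applied with parameter space $G/H$ equipped with its Borel $\sigma$-algebra, Polish target $G$, and the closed-valued multifunction $F(q)=\pi^{-1}(q)$.

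The only hypothesis of that theorem that needs checking is the weak measurability of $F$: for every open $U\subseteq G$ the set $\{q\in G/H:F(q)\cap U\neq\emptyset\}$ must be Borel. But this set is exactly $\pi(U)$, which is open precisely because $\pi$ is an open map --- so the openness of $\pi$ is what powers the argument. The theorem then yields a Borel map $s:G/H\to G$ with $s(q)\in F(q)=\pi^{-1}(q)$ for all $q$, i.e.\ $\pi\circ s=\mathrm{id}_{G/H}$, which is the asserted section.

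For a self-contained alternative I would build $s$ directly by successive approximation. Fix a complete compatible metric $d$ on $G$ and, using that $G/H$ is second countable and regular (hence metrizable), a compatible metric on $G/H$. Construct inductively, for each $n\geq 1$, a countable Borel partition $\mathcal{P}_{n}$ of $G/H$ refining $\mathcal{P}_{n-1}$, together with, over each piece $P\in\mathcal{P}_{n}$, a nonempty open set $U^{P}_{n}\subseteq G$ such that: $U^{P}_{n}$ has $d$-diameter $<2^{-n}$; $\pi(U^{P}_{n})\supseteq P$ and has diameter $<2^{-n}$ in $G/H$; and $U^{P}_{n}\subseteq U^{P'}_{n-1}$ whenever $P\subseteq P'$ (taking $\mathcal{P}_{0}=\{G/H\}$ and $U^{G/H}_{0}=G$). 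At stage $n$ one covers each previously chosen open set by open balls small enough that both their $d$-diameter and the diameter of their $\pi$-image are $<2^{-n}$ --- this is possible by continuity of $\pi$ --- extracts a countable subcover using that $G$ is Lindel\"of, and disjointifies the traces of these images on $P$ to obtain the refinement. Then one sets $s(q)$ to be the unique point of $\bigcap_{n}\overline{U^{P_{n}(q)}_{n}}$, where $P_{n}(q)\in\mathcal{P}_{n}$ is the piece containing $q$; this intersection is a singleton by completeness of $d$ and because the diameters tend to $0$. The map $s$ is Borel, being the pointwise limit of the countably-valued Borel maps $q\mapsto y_{n,P_{n}(q)}$ for fixed choices $y_{n,P}\in U^{P}_{n}$, and $\pi(s(q))=q$ because each $\pi(U^{P_{n}(q)}_{n})$ contains $q$ and has diameter tending to $0$. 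The one genuinely delicate point --- and the reason one controls the diameter of $\pi(U^{P}_{n})$, not merely of $U^{P}_{n}$ --- is ensuring $\pi\circ s=\mathrm{id}$ rather than having $s$ land in a neighbouring fibre; in the selection-theorem route this is built in, so the only real work there is verifying the hypotheses, whereas in the elementary route the bookkeeping of the inductive construction is the main chore.
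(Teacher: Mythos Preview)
The paper does not actually prove this lemma: it is stated with the citation to \cite[Proposition~4]{MO76} and used as a black box throughout, so there is no argument in the paper to compare against. Your primary route via the Kuratowski--Ryll-Nardzewski selection theorem is correct and is the standard modern way to obtain such a Borel section; the verification that $\{q:F(q)\cap U\neq\emptyset\}=\pi(U)$ is open is exactly the point. One minor remark: the openness of $\pi$ is not an additional hypothesis but an automatic feature of quotient maps of topological groups, since the saturation $UH=\bigcup_{h\in H}Uh$ of an open set is open --- you may want to reword ``by hypothesis'' accordingly. Your alternative refinement construction is also along standard lines; since the paper itself defers entirely to Moore, the selection-theorem argument alone already goes well beyond what the paper supplies.
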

\begin{lemma}[{cf. \cite[Proposition 5]{MO76}}]\label{lem:automaticcontinuity}
    Let $G_1$ and $G_2$ be Polish groups then any Borel homomorphism $\varphi: G_1\rightarrow G_2$ is continuous.
\end{lemma}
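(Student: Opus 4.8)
The plan is to reduce to continuity at the identity and then combine the separability of $G_2$ with the Baire category theorem in $G_1$ via Pettis's theorem on difference sets. First I would use that $\varphi$ is a homomorphism and that left translations are homeomorphisms in both groups to reduce the problem to showing that $\varphi$ is continuous at the identity $e_1\in G_1$. So I fix an open neighbourhood $U$ of $e_2\in G_2$ and aim to produce an open neighbourhood $V$ of $e_1$ with $\varphi(V)\subseteq U$. Since $G_2$ is a topological group, I first choose a \emph{symmetric} open neighbourhood $W$ of $e_2$ with $WW\subseteq U$.

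Next, because $G_2$ is Polish and hence separable, there is a countable family $(g_n)_{n\in\bN}$ in $G_2$ with $G_2=\bigcup_n g_n W$, and therefore $G_1=\bigcup_n \varphi^{-1}(g_n W)$. Each $\varphi^{-1}(g_n W)$ is a Borel subset of $G_1$, since $g_n W$ is open and $\varphi$ is Borel, and hence has the Baire property. As $G_1$ is Polish it is a Baire space, so it is non-meager in itself; thus at least one set $B:=\varphi^{-1}(g_n W)$ is non-meager with the Baire property. By Pettis's theorem, for such a $B$ the set $B^{-1}B$ is a neighbourhood of $e_1$, so there is an open $V\ni e_1$ with $V\subseteq B^{-1}B$.

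Finally I would verify that this $V$ works. Given $v\in V$, write $v=b_1^{-1}b_2$ with $b_1,b_2\in B$; then $\varphi(v)=\varphi(b_1)^{-1}\varphi(b_2)$, and since $\varphi(b_i)\in g_n W$ we may write $\varphi(b_i)=g_n w_i$ with $w_i\in W$, so that $\varphi(v)=w_1^{-1}g_n^{-1}g_n w_2=w_1^{-1}w_2\in W^{-1}W=WW\subseteq U$. Hence $\varphi$ is continuous at $e_1$, and by translation continuous on all of $G_1$.

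The proof is otherwise elementary; the one genuine input is Pettis's theorem (together with the standard facts that Borel sets in Polish spaces have the Baire property and that Polish spaces are Baire), and the only place calling for care is ensuring that $W$ is chosen symmetric so that $W^{-1}W\subseteq U$ and correctly tracking which translate $g_nW$ produces the non-meager piece. Alternatively one could simply invoke \cite[Proposition 5]{MO76} directly, but the argument above is essentially its content.
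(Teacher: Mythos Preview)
Your argument is correct and is exactly the standard Banach--Pettis proof of automatic continuity for Borel homomorphisms between Polish groups. The paper itself does not give a proof of this lemma at all: it simply records the statement and cites \cite[Proposition 5]{MO76}, so there is nothing to compare beyond noting that you have supplied the classical argument behind that citation.
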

For a group $G$ and a continuous $G$-module $N$ we will denote by $Z^n_b(G,N)$ the Borel $n$-cocycles of $G$ with values in $N$. We will denote the associated cohomology groups by $H^n_b(G,N)$ as introduced by Moore in \cite{MO76}. In our setting $N$ will always be a Polish abelian group. In this case $H_b^n(G,\cdot)$ is a functor of the coefficient module that turns short exact sequences into long exact sequences. Along with another couple of natural conditions, $H_b^n$ is the unique such functor (see \cite[Theorem 2]{MO76}). When $G$ is discrete $H_b^n(G,N)$ coincides with the usual cohomology for discrete groups (of \cite{BRO82} for example), in this case we simply denote it by $H^n(G,M)$.
\par For any discrete group $G$ and $\omega\in Z^2(G,\bT)$ we can form the twisted group algebra $\bC[G,\omega]$ as the algebra generated by unitaries $u_g$ for $g\in G$ satisfying the relation
\[
u_gu_h=\omega(g,h)u_{gh}.
\]
It is easy to see that the isomorphism class of $\bC[G,\omega]$ as a $^*$-algebra coincides for cohomologous $2$-cocycles. One can represent $\bC[G,\omega]$ canonically as operators on $l^2(G)$ under the $\omega$-regular representation with
\[u_g^{\omega}\delta_h=\omega(g,h)\delta_{gh}\]
for all $g,h\in G$. The associated reduced and full C$^*$-algebras are denoted by $C_r^*(G,\omega)$ and $C^*(G,\omega)$ respectively. 
\subsection{Projective unitary representations} \label{subsec:projreps}
A projective unitary representation of a group $G$ is a continuous homomorphism $G\rightarrow P\cU(\cH)$ (where the target is equipped with the quotient of the strong or equivalently $\sigma$-strong topology).\ There is a $2$-cohomology obstruction to lifting a projective unitary representation to a unitary representation (see \cite{MA58}). In this section we discuss the analogous obstruction in the generality of projective representations on C$^*$ and von Neumann algebras.
\par A \emph{projective unitary representation} of a group $G$ on a C$^*$-algebra $A$, or simply a projective representation, is a continuous group homomorphism $G\rightarrow P\cU(A)$. One can define projective representations on a von Neumann algebra $M$ in the same way. For the proceeding discussion we only comment on the case of C$^*$-algebras, although the case of von Neumann algebras is analogous.
\par For any projective representation $u:G\rightarrow P\cU(A)$ one can choose a Borel lifting $\widetilde{u}:G\rightarrow \cU(A)$ as an application of Lemma \ref{lem:lift}.\footnote{Note that any second countable locally compact Hausdorff group is Polish by \cite[Theorem 5.3]{KE95}.} Then the function $\lambda_{g,h}=\widetilde{u}_g\widetilde{u}_h\widetilde{u}_{gh}^*\in Z^2_b(G,\bT)$ by the following computation
\begin{align*}
    \widetilde{u}_g\widetilde{u}_h\widetilde{u}_k&=\lambda_{g,h}\widetilde{u}_{gh}\widetilde{u}_k\\
    &=\lambda_{g,h}\lambda_{gh,k}\tu_{ghk}
\end{align*}
\begin{align*}
    \tu_g\tu_h\tu_k&=\tu_g\lambda_{h,k}\tu_{hk}\\
    &=\lambda_{h,k}\lambda_{g,hk}\tu_{ghk}.
\end{align*}
Moreover, it can be easily checked that for any other Borel lift $\tu'$ of $u$ the associated Borel $2$-cocycle differs from $\lambda$ by a Borel coboundary. So to any projective representation one has a well defined class in $H^2_b(G,\bT)$.
\begin{definition}
    Let $u:G\rightarrow P\cU(A)$ be a projective representation. We denote its associated class in $H^2(G,\bT)$ by $\ob(u)$.
\end{definition}
We note that $\ob(u)$ is precisely the obstruction to lifting $u$ to a continuous representation $G\rightarrow \cU(A)$. Indeed, if $\ob(u)=\lambda$ is a coboundary, then there exists a Borel lift $\tu:G\rightarrow \cU(A)$ and a Borel function $\mu:G\rightarrow \bT$ such that $\lambda_{g,h}=\mu_g\mu_h\overline{\mu_{gh}}$. Thus the function $g\mapsto \ti{u}_g\overline{\mu_g}$ gives a Borel group homomorphism lift of $u$. As $U(A)$ and $G$ are Polish, this homomorphic lift is then continuous by Lemma \ref{lem:automaticcontinuity}. 

\par For a group $G$, $\omega\in H^2_b(G,\bT)$ and a C$^*$-algebra $A$ we will often call a projective representation $u:G\rightarrow P\cU(A)$ with $\ob(u)=\omega$ a $(G,\omega)$ representation on $A$.

\section{projective representations on C*-algebras}\label{sec:projrepCstar}
As in the case of lifting obstructions for $G$-kernels (see \cite{thesis:sergio,EVGI23,IZ23}) there are K-theoretic restrictions to the values of lifting obstructions for projective representations of C$^*$-algebras too. We will develop these in this section. 
\par Let $\tau\in T(A)$ and denote by $\Delta_\tau:U(A)\rightarrow \bR/\tau(K_0(A))$ the Skandalis de la Harpe determinant of \cite{SkdlH}. Then it is shown in \cite[Proposition 2.7]{GIIZPE25}(see also \cite{ROCH23}) that whenever the canonical map $\pi_1(U(A))\rightarrow K_0(A)$ is a surjection, then the kernel of $\Delta_\tau$ coincides with the unitaries of the form
\[
\ker(\Delta_\tau)=\{u \in \cU(A): u=e^{ih_1}e^{ih_2}\ldots e^{ih_n},\ h_i\in A^{sa}\cap \ker(\tau), n\in \bN\}.
\]
Moreover, when equipped with the metric
\[
d_\tau(u,v)=\inf\{\sum_{j=1}^n\|h_j\|:h_j\in A^{sa}\cap\ker(\tau), uv^*=e^{ih_1}\ldots e^{ih_n}\},
\]
the kernel of $\Delta_\tau$ is a Polish group which we denote from now on by $S\cU_\tau(A)$. Note that by \cite[Lemma 2.9]{GIIZPE25} the quotient $S\cU_\tau(A)/\bT\cap S\cU_\tau(A)$ coincides with $P\cU(A)$ as topological groups. Thus, for any projective representation $u$ of $G$ on $A$, one can apply Lemma \ref{lem:lift} to lift to a Borel map $\widetilde{u}\in S\cU_\tau(A)$. Now, as in Section \ref{subsec:projreps}, $\lambda_{g,h}=\ti{u}_g\ti{u}_h\ti{u}_{gh}^* \in \tau(K_0(A))/\bZ$  is a Borel $2$-cocycle and its cohomology class is well-defined.
\begin{definition}
Let $A$ be a C$^*$-algebra with a connected unitary group such that the canonical map $\pi_1(\cU(A))\rightarrow K_0(A)$ is a surjection.\footnote{As discussed in \cite{GIIZPE25} this last condition holds in many cases of interest. For example it holds when $A$ absorbs the Jiang--Su algebra of \cite{Jiang-Su}.} Let $u:G\rightarrow P\cU(A)$ be a projective representation and $\tau\in T(A)$ a trace. We denote its associated class in $H_b^2(G,\tau(K_0(A))/\bZ)$ by $\ob_\tau(u)$.
\end{definition}
As the $d_\tau$ topology is weaker then the norm topology, the inclusion $\iota:S\cU_\tau(A)\hookrightarrow \cU(A)$ is continuous. Hence any Borel lift $\widetilde{u}$ to $S\cU_\tau(A)$ also gives a Borel lift $\widetilde{u}$ to $\cU(A)$ and so 
\begin{equation}\label{eqn: inclusion}
    \iota_*(\ob_\tau(u))=\ob(u).
\end{equation}
This immediately gives the following result.
\begin{theorem}\label{thm:JiangSu}
    Let $\cZ$ be the Jiang-Su algebra and $G$ a locally compact second countable Hausdorff group. Then any projective unitary representation of $G$ on $\cZ$ lifts to a unitary representation.
\end{theorem}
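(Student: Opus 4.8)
The plan is to observe that the $K$-theoretic invariant $\ob_\tau$ introduced above is, in the case of $\cZ$, valued in a cohomology group that is forced to vanish, and then to invoke the compatibility \eqref{eqn: inclusion} together with the discussion of Section \ref{subsec:projreps} that identifies the vanishing of $\ob(u)$ with liftability of $u$.

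First I would record the relevant structural facts about $\cZ$. It has a unique tracial state $\tau$, its unitary group is connected, and $K_0(\cZ)\cong\bZ$ with $[1_\cZ]\mapsto 1$; moreover, since $\cZ$ absorbs itself tensorially, the canonical map $\pi_1(\cU(\cZ))\rightarrow K_0(\cZ)$ is surjective — this is exactly the running hypothesis flagged in the footnote to the definition of $\ob_\tau$, so $\ob_\tau$ is defined for every projective representation on $\cZ$. In particular $\tau(K_0(\cZ))=\bZ$, hence the coefficient group appearing in the definition of $\ob_\tau$ is
\[
\tau(K_0(\cZ))/\bZ=\bZ/\bZ=0 .
\]

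Next, given an arbitrary projective representation $u:G\rightarrow P\cU(\cZ)$, the invariant $\ob_\tau(u)$ lies in $H_b^2(G,\tau(K_0(\cZ))/\bZ)=H_b^2(G,0)=0$ and is therefore automatically trivial. Applying \eqref{eqn: inclusion}, where $\iota_*$ is induced by the coefficient inclusion $\tau(K_0(\cZ))/\bZ\hookrightarrow\bR/\bZ=\bT$, we obtain $\ob(u)=\iota_*(\ob_\tau(u))=0$ in $H_b^2(G,\bT)$. By the discussion following the definition of $\ob$ in Section \ref{subsec:projreps}, the vanishing of $\ob(u)$ means precisely that $u$ admits a continuous homomorphic lift $G\rightarrow\cU(\cZ)$; that is, $u$ lifts to a genuine unitary representation of $G$ on $\cZ$.

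There is essentially no obstacle here: the entire content is carried by the construction of $\ob_\tau$ and the computation $K_0(\cZ)=\bZ$. The only points needing any care are checking that $\cZ$ satisfies the standing hypotheses under which $\ob_\tau$ is defined, namely connectedness of $\cU(\cZ)$ and surjectivity of $\pi_1(\cU(\cZ))\rightarrow K_0(\cZ)$, both of which follow from the $\cZ$-stability of $\cZ$ and are already cited in the text.
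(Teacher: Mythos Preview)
Your proof is correct and follows exactly the same line as the paper's: verify that $\cZ$ satisfies the hypotheses needed to define $\ob_\tau$, observe that $\tau(K_0(\cZ))/\bZ=0$ so $\ob_\tau(u)$ is forced to vanish, and conclude via \eqref{eqn: inclusion} that $\ob(u)=0$ so $u$ lifts. The only cosmetic difference is that the paper cites \cite{Jiang-Su} directly for the connectedness of $\cU(\cZ)$ rather than deducing it from $\cZ$-stability, but either justification is fine.
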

\begin{proof}
    Let $u:G\rightarrow P\cU(\cZ)$ be a projective representation. By \cite{Jiang-Su} the unitary group of $\cZ$ is connected and its $K_0$ is $\bZ$ with the unique trace inducing the identity on $\bZ$. Moreover $\cZ$ is itself $\cZ$-stable so $\pi_1(\cU(\cZ))\rightarrow K_0(\cZ)$ is a surjection. Thus $\ob_\tau(u)$ is valued in the trivial group and so itself trivial. By \eqref{eqn: inclusion} the result follows.
\end{proof}
We can also characterise, for finite groups $G$, the values of $2$-cocycles that arise on UHF-algebras of infinite type. Before we do so we need an intermediate result. In the preceeding proof we denote the matrix units of $M_n$ by $e_{i,j}$.
\begin{lemma}\label{lem:projrepsfd}
    Let $G$ be a finite group and $\omega\in H^2(G,\bT)$ of order $n$. Then there exists a projective representation $u:G\rightarrow P\cU(n^k)$ for some $k\in \bN$.
\end{lemma}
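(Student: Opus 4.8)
The plan is to realize $\omega$ concretely via the twisted group algebra $\bC[G,\omega]$ and then embed it into a matrix algebra of the required size. Fix a genuine $2$-cocycle (still called $\omega$) representing the class, normalized so that its values lie in the group $\mu_n\subset\bT$ of $n$-th roots of unity; this is possible because the class has order $n$, so after multiplying by a coboundary we may assume $\omega(g,h)^n=1$ for all $g,h$. First I would consider the $\omega$-regular representation $u^\omega$ of $G$ on $\ell^2(G)$, which furnishes unitaries $u_g^\omega$ with $u_g^\omega u_h^\omega=\omega(g,h)u_{gh}^\omega$; since $G$ is finite this already gives a projective representation $G\to P\cU(|G|)$ with obstruction $\omega$. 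The issue is that $|G|$ need not be a power of $n$, so the second step is to replace $\ell^2(G)$ by a space whose dimension is a power of $n$.

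For that, I would use the structure of the twisted group algebra $A:=\bC[G,\omega]$. It is a finite-dimensional $C^*$-algebra, hence a direct sum of matrix blocks $\bigoplus_i M_{d_i}(\bC)$, and because $\omega$ has order exactly $n$ one knows (this is the classical theory of projective representations, going back to Schur) that each irreducible $\omega$-projective representation has dimension divisible by... well, more precisely, the dimensions $d_i$ need not themselves be powers of $n$, so I would not argue blockwise. Instead, the cleaner route: embed $A$ unitally into some matrix algebra $M_N(\bC)$ — for instance via a faithful $*$-representation — and then embed $M_N(\bC)$ unitally into $M_{n^k}(\bC)$ for $k$ large enough that $n^k$ is a multiple of $N$, by the standard amplification $x\mapsto x\otimes 1_{n^k/N}$ padded appropriately; any unital embedding $M_N\hookrightarrow M_{n^k}$ exists precisely when $N\mid n^k$, which holds for suitable $k$ since $n>1$ implies $n^k\to\infty$ through multiples of... here one needs $N\mid n^k$, so one should instead take $k$ with $n^k$ a multiple of $N$, which requires every prime factor of $N$ to divide $n$. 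That is not automatic, so the honest fix is: take $k$ large and embed $M_N(\bC)\hookrightarrow M_{Nn^k}(\bC)$, noting $Nn^k$ need not be a power of $n$ either.

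The correct and simplest approach, which I would actually carry out, avoids this bookkeeping entirely: form the $k$-fold tensor power. Start from the $\omega$-regular representation giving $u:G\to P\cU(\ell^2(G))$, i.e. $u:G\to P\cU(m)$ with $m=|G|$. This is not yet of the desired form. Instead I would directly build, using that $\omega$ takes values in $\mu_n$, a projective representation valued in $P\cU(n)$ associated to the cyclic group $\bZ/n$ realizing a generator of $H^2(\bZ/n\times\bZ/n,\bT)$ via the clock-and-shift (Weyl) unitaries, and then observe that an arbitrary finite $\omega$ decomposes: the class $\omega$ of order $n$ can be written, via the structure theorem for $H^2(G,\bT)\cong H^2(G,\bT)$ and the fact that any finite abelian group is a product of cyclics, as a sum of pullbacks of such standard classes under homomorphisms $G\to\bZ/n_i$. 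Taking the tensor product of the corresponding clock-shift projective representations (each on $M_{n_i}(\bC)$, which embeds unitally into $M_{n^k}$ once we pass to a common power) yields a projective representation on $M_{n^{k}}(\bC)$ with obstruction $\omega$.

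\textbf{The main obstacle.} The genuinely delicate point is arranging that the ambient matrix size is a \emph{power of $n$} rather than merely a multiple of $n$ or of $|G|$. The $\omega$-regular representation is immediate but lands in $P\cU(|G|)$; converting this to $P\cU(n^k)$ requires either (a) a careful decomposition of $\omega$ into cyclic pieces of orders dividing $n$ and a tensor-product construction, so that the dimension is manifestly a product of divisors of $n$ and hence — after replacing each factor $n_i$ by $n$ via a unital embedding $M_{n_i}\hookrightarrow M_n$, which exists since $n_i\mid n$ — a power of $n$; or (b) using that the twisted group $C^*$-algebra of $G$ with a cocycle of order $n$ is $M_n$-stable in an appropriate sense. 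Route (a) is the one I would write up: the reduction of a finite group $2$-cocycle of order $n$ to a product of standard $\bZ/d$-type cocycles with $d\mid n$, combined with the Weyl clock-and-shift construction and unital amplifications $M_d\hookrightarrow M_n$, is the heart of the argument, and everything else (the cocycle identity, continuity being automatic since $G$ is finite) is routine.
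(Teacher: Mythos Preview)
Your proposal has a genuine gap in route (a), which you single out as the argument you would actually write up. The claim that an arbitrary $2$-cocycle $\omega\in H^2(G,\bT)$ of order $n$ decomposes as a sum of pullbacks of clock-and-shift cocycles along homomorphisms $G\to\bZ/n_i$ (or $G\to\bZ/n_i\times\bZ/n_i$) is false. For a perfect finite group such as $G=A_5$, there are no non-trivial homomorphisms from $G$ to any abelian group, yet $H^2(A_5,\bT)\cong\bZ/2$ is non-trivial. So the non-trivial $\omega$ of order $2$ on $A_5$ cannot be reached by your construction, and your tensor-product-of-Weyl-pairs argument produces nothing beyond the trivial projective representation in that case. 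The structure of $H^2(G,\bT)$ as a finite abelian group does not translate into a decomposition of cocycles as pullbacks from abelian quotients of $G$.

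The paper's proof avoids this entirely. It writes $\omega(g,h)=\xi^{\lambda(g,h)}$ with $\lambda$ a $\bZ/n$-valued cocycle, takes the Hilbert space $\cK=\bigotimes_{k\in G}M_n$ of dimension $n^{2|G|}$, and builds unitaries $\pi(g)=\sigma_g\circ\bigotimes_{k\in G}\Ad(U_0^{\lambda(g,k)})$ where $\sigma_g$ is the shift on the tensor factors and $U_0$ is diagonal with eigenvalues the $n$-th roots of unity. These satisfy $\pi(g)\pi(h)=\Ad(U^{\lambda(g,h)})\pi(gh)$ with $U=\bigotimes_k U_0$; restricting $\pi$ to a single $\Ad(U)$-eigenspace $\cK_1$ of dimension $n^{2|G|-1}$ converts the $\Ad(U^{\lambda(g,h)})$ into the scalar $\xi^{\lambda(g,h)}=\omega(g,h)$. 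This is a direct construction valid for any finite $G$, with no appeal to abelian quotients, and the power of $n$ comes for free from the size of the tensor power and the eigenspace decomposition.
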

\begin{proof}
    By the long exact sequence in group cohomology associated to the short exact sequence of coefficient modules
    \[
    \bZ_n\xrightarrow{\iota}\bT\xrightarrow{\times n}\bT,
    \]
    $\omega$ is of order $n$ if and only if it comes from the image of the map $\iota_*:H^2(\Gamma,\bZ_n)\rightarrow H^2(\Gamma,\bT)$. Thus we may write $\omega(g,h)=\xi^{\lambda(g,h)}$ where $\lambda(g,h)\in \bZ$ satisfies the $2$-cocycle identity mod $n$ and $\xi$ is the primitive $n$-th root of unity.
    \par Let $\cK=\bigotimes_{k\in \Gamma}M_n$ which is a Hilbert space of dimension $n^{2|\Gamma|}$ with basis of the form $\bigotimes_{k\in \Gamma}e_{i_{k},j_{k}}$ for $1\leq i_k,j_k\leq n$. Set
    $U_0=\sum_{i=1}^n\xi^{i}e_{i,i}\in M_n$ and $U=\bigotimes_{k\in \Gamma}U_0$. Let $\sigma_g$ be the shift by $g$ on the tensor product which defines an element of $\cU(\cK)$ and let
    \[
    \pi(g)=\sigma_g\circ \bigotimes_{k\in \Gamma}\Ad(U_0^{\lambda(g,k)})\in \cU(\cK)
    \]
    Note that, as $\lambda$ satisfies the $2$-cocycle identity mod $n$, one has that
    \begin{align}\label{eqn:pirelation}
        \pi(g)\pi(h)&=\sigma_g\circ \bigotimes_{k\in \Gamma}\Ad(U_0^{\lambda(g,k)})\circ \sigma_h\circ \bigotimes_{k\in \Gamma} \Ad(U_0^{\lambda(h,k)})\notag\\
        &=\sigma_{gh}\circ \bigotimes_{k\in \Gamma} \Ad(U_0^{\lambda(g,hk)+\lambda(h,k)})\notag\\
        &=\sigma_{gh}\circ \bigotimes_{k\in \Gamma}\Ad(U_0^{\lambda(g,h)}U_0^{\lambda(gh,k)})\\
        &=\Ad (U^{\lambda(g,h)})\circ \pi(gh).\notag
    \end{align}
    Consider the subspaces of $\cK$ given by 
    \[
    \cK_{l}=\{A\in \cK: UAU^*=\xi^l A\}
    \]
    for $1\leq l\leq n$. Then $\cK\cong \oplus_{l=1}^n \cK_{l}$ and $\Ad(U)|_{\cK_l}$ induces isomorphisms $\cK_l\cong \cK_{l+1}$. Thus each $\cK_l$ has dimension $n^{2|\Gamma|-1}$. Note that $\Ad(U)$ commutes with $\pi(g)$ and so $\pi(g)$ preserves each $\cK_l$. Now set
    \[
    \phi(g)=\pi(g)|_{\cK_1}
    \]
    The operators $\phi(g)$ are unitary and by \eqref{eqn:pirelation} satisfy
    \[
    \phi(g)\phi(h)=\xi^{\lambda(g,h)}\phi(gh)=\omega(g,h)\phi(gh)
    \]
    which gives a $(G,\omega)$ projective representation of dimension $n^{2|\Gamma|-1}$ .
\end{proof}
\begin{theorem}\label{thm:UHF}
    Let $n\in \bN$, $G$ be a finite group and $\lambda\in H^2(G,\bT)$. Then there exists a $(G,\lambda)$ representation on $M_{n^\infty}$ if and only if $\lambda^{n^k}=1$ for some $k\in \bN$.
\end{theorem}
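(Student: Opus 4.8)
The plan is to treat the two implications separately; the ``if'' direction is a direct construction resting on Lemma \ref{lem:projrepsfd}, while the ``only if'' direction extracts the constraint from the invariant $\ob_\tau$ of the previous section.

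For the ``if'' direction, assume $\lambda^{n^k}=1$ and let $r=\ord(\lambda)$, so that $r\mid n^k$. If $r=1$ the trivial representation suffices, so suppose $r>1$. By Lemma \ref{lem:projrepsfd} there is a $(G,\lambda)$ projective representation $u:G\to P\cU(r^m)$ for some $m\in\bN$. Since $r\mid n^k$ we get $r^m\mid n^{mk}$, hence a unital embedding $M_{r^m}\hookrightarrow M_{n^{mk}}\hookrightarrow M_{n^\infty}$. I would then note that any unital $^*$-homomorphism $B\to C$ induces a continuous homomorphism $P\cU(B)\to P\cU(C)$ preserving the lifting obstruction, because a Borel lift composed with the map is again a Borel lift carrying the same $\bT$-valued $2$-cocycle; composing $u$ with $P\cU(r^m)=P\cU(M_{r^m})\to P\cU(M_{n^\infty})$ then produces a $(G,\lambda)$ representation on $M_{n^\infty}$.

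For the ``only if'' direction, let $A=M_{n^\infty}$ and let $u:G\to P\cU(A)$ have $\ob(u)=\lambda$. The point is that $A$ lies in the scope of $\ob_\tau$: its unitary group is connected since $A$ is AF, it absorbs $\cZ$ so that $\pi_1(\cU(A))\to K_0(A)$ is surjective, and it carries a unique trace $\tau$ with $K_0(A)=\bZ[1/n]$ and $\tau$ inducing the canonical inclusion $\bZ[1/n]\hookrightarrow\bR$. Thus $\ob_\tau(u)\in H^2_b(G,\bZ[1/n]/\bZ)=H^2(G,\bZ[1/n]/\bZ)$ is defined, and \eqref{eqn: inclusion} gives $\iota_*(\ob_\tau(u))=\lambda$ for the coefficient inclusion $\iota:\bZ[1/n]/\bZ\hookrightarrow\bT$. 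Representing $\ob_\tau(u)$ by the cocycle $\lambda'_{g,h}=\tu_g\tu_h\tu_{gh}^*$ from a Borel lift $\tu\in S\cU_\tau(A)$, and using that $G$ is finite so that $\lambda'$ takes only finitely many values, each annihilated by a power of $n$, I get $n^k\lambda'\equiv 0$ for $k$ large; hence $n^k\,\ob_\tau(u)=0$ and $n^k\lambda=\iota_*(n^k\,\ob_\tau(u))=0$, i.e.\ $\lambda^{n^k}=1$.

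There is no real obstacle left once Lemma \ref{lem:projrepsfd} and the invariant $\ob_\tau$ (with the compatibility \eqref{eqn: inclusion}) are in hand: the construction direction is a matter of embedding matrix algebras compatibly, and the restriction direction reduces to the elementary observation that the $\tau(K_0(M_{n^\infty}))$-valued obstruction of a finite-group projective representation is automatically $n$-power torsion. The one place to be slightly careful is checking that $M_{n^\infty}$ genuinely satisfies the hypotheses needed for $\ob_\tau$ to be defined, which it does.
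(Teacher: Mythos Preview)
Your proposal is correct and follows essentially the same approach as the paper: the ``if'' direction via Lemma~\ref{lem:projrepsfd} and a unital matrix embedding, and the ``only if'' direction via the refined invariant $\ob_\tau(u)\in H^2(G,\bZ[1/n]/\bZ)$ together with \eqref{eqn: inclusion} and the finiteness of $G$. The only cosmetic differences are that the paper routes the embedding as $M_l\hookrightarrow M_{r^\infty}\hookrightarrow M_{n^\infty}$ rather than through $M_{n^{mk}}$, and leaves implicit both the verification that $M_{n^\infty}$ satisfies the hypotheses for $\ob_\tau$ and the observation that unital $^*$-homomorphisms preserve the lifting obstruction---points you spell out explicitly.
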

\begin{proof}
    If $\lambda^{n^k}=1$ for some $k\in \bN$ then $\lambda$ has order $r$ dividing $n^k$. By Lemma \ref{lem:projrepsfd} there exists a projective representation $v:G\rightarrow P\cU(l)$ with $l$ a power of $r$ and $\ob(v)=\lambda$. As there exists a unital embedding $M_l\rightarrow M_{r^\infty}\rightarrow  M_{n^\infty}$  then one also has a projective representation $u:G\rightarrow P\cU(A)$ with $\ob(u)=\lambda$.
    \par Let $u:G\rightarrow P\cU(M_{n^\infty})$ be a projective representation. Then $\ob_\tau(u)\in H^2(G,\bZ[1/n]/\bZ)$. For any $g,h\in G$ there exists some power of $n$ such that $\ob_\tau(u)(g,h)$ is annihilated after exponentiating by this power. Thus, as $G$ is finite, $\ob_{\tau}(u)^{n^k}=1$ for some $k\in \bN$ and so by \eqref{eqn: inclusion} $\ob(u)^{n^k}=1$ too.
\end{proof}
By using the universal coefficient theorem for group cohomology one can extend the obstruction from Theorem \ref{thm:UHF}.
\begin{proposition}\label{prop:UHFinfinite}
    Let $G$ be a discrete group such that $H_2(G,\bZ)$ is finitely generated and $\lambda\in H^2(G,\bT)$. Let $A$ be a unital C$^*$-algebra with a connected unitary group. If there exists a $(G,\lambda)$ representation on $A$ then  $\lambda^{n^k}=1$ for some $k\in \bN$.
\end{proposition}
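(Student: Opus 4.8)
The plan is to push the K-theoretic obstruction $\ob_\tau$ of the previous section through the universal coefficient theorem for group cohomology, replacing the ``finitely many pairs $(g,h)$'' counting step in the proof of Theorem~\ref{thm:UHF} by a ``finitely many generators of $H_2(G,\bZ)$'' one. To make the statement meaningful I read the hypothesis on $A$ as: $A$ carries a trace $\tau$ with $\pi_1(\cU(A))\to K_0(A)$ surjective and $\tau(K_0(A))=\bZ[1/n]$, so that $\ob_\tau$ is defined and $\tau(K_0(A))/\bZ=\bZ[1/n]/\bZ$ (this is the case for $A=M_{n^\infty}$). Given a $(G,\lambda)$ representation $u$ on $A$, the first step is simply to invoke \eqref{eqn: inclusion}: the class $\ob_\tau(u)\in H^2(G,\bZ[1/n]/\bZ)$ satisfies $\iota_*(\ob_\tau(u))=\lambda$ for the inclusion $\iota\colon\bZ[1/n]/\bZ\hookrightarrow\bT$, so $\lambda$ lies in the image of $\iota_*\colon H^2(G,\bZ[1/n]/\bZ)\to H^2(G,\bT)$.

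Next I would bring in the universal coefficient theorem. Computing group cohomology from a free resolution of $\bZ$ over $\bZ G$ and passing to coinvariants produces a complex of free abelian groups with homology $H_*(G,\bZ)$, so for every trivial coefficient module $M$ there is a short exact sequence
\[
0\longrightarrow\Ext^1_{\bZ}(H_1(G,\bZ),M)\longrightarrow H^2(G,M)\longrightarrow\Hom_{\bZ}(H_2(G,\bZ),M)\longrightarrow 0
\]
natural in $M$. For $M=\bT$, divisibility of $\bT$ kills the $\Ext$ term, giving an isomorphism $H^2(G,\bT)\cong\Hom_{\bZ}(H_2(G,\bZ),\bT)$; let $\bar\lambda$ be the homomorphism corresponding to $\lambda$. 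Applying naturality of the sequence to $\iota$ yields a commuting ladder whose right-hand vertical arrow is post-composition with $\iota$; chasing $\ob_\tau(u)$ through the ladder, and using injectivity of $H^2(G,\bT)\to\Hom_{\bZ}(H_2(G,\bZ),\bT)$, one finds that $\bar\lambda$ factors through $\iota$, i.e.\ $\bar\lambda$ is valued in $\bZ[1/n]/\bZ$.

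The final step is a counting argument. Since $H_2(G,\bZ)$ is finitely generated, $\bar\lambda(H_2(G,\bZ))$ is a finitely generated subgroup of the torsion group $\bZ[1/n]/\bZ$, hence finite, hence contained in $\tfrac{1}{n^k}\bZ/\bZ\cong\bZ/n^k\bZ$ for some $k\in\bN$; thus $n^k\bar\lambda=0$, and transporting back through the above isomorphism gives $\lambda^{n^k}=1$. I expect the main obstacle to be the bookkeeping around the universal coefficient theorem — in particular its naturality in the coefficient module, and that it is valid for an arbitrary (not necessarily finitely generated) group $G$ — together with, for any version allowing $A$ more general than $M_{n^\infty}$, the observation that one needs $\tau(K_0(A))/\bZ$ to be torsion with all finitely generated subgroups cyclic of $n$-power order, which is exactly what the last step consumes. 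The diagram chase itself and the subgroup structure of $\bZ[1/n]/\bZ$ are routine.
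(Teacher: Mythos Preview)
Your proposal is correct and follows essentially the same route as the paper: invoke \eqref{eqn: inclusion} to see that $\lambda$ lies in the image of $\iota_*\colon H^2(G,\bZ[1/n]/\bZ)\to H^2(G,\bT)$, pass to $\Hom_{\bZ}(H_2(G,\bZ),-)$ via the universal coefficient theorem, and then use finite generation of $H_2(G,\bZ)$ to bound the exponent. You are in fact a bit more careful than the paper on two points: you spell out the additional hypotheses on $A$ (a trace $\tau$ with $\pi_1(\cU(A))\to K_0(A)$ surjective and $\tau(K_0(A))=\bZ[1/n]$) that are needed for the argument to run---the paper's proof silently specialises to $A=M_{n^\infty}$---and you handle the $\Ext$ term in the UCT explicitly rather than suppressing it.
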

\begin{proof}
    By the universal coefficient theorem for group cohomology \cite[Proposition 11.9.2]{TD08} one has a natural isomorphism
    \begin{equation}\label{eqn:UCTidentification}
        H^2(G,\bT)\cong \Hom_{\bZ}(H_2(G,\bZ),\bT).
    \end{equation}
    If $u:G\rightarrow P\cU(M_{n^\infty})$ is a projective unitary representation then $\ob(u)=\iota_*(\ob_\tau(u))$ by \eqref{eqn: inclusion}. So under the identification \eqref{eqn:UCTidentification} we have that 
    \[
    \ob(u)\in \Hom_{\bZ}(H_2(G,\bZ),\bZ[1/n]/\bZ).
    \]
    Any element of $\bZ[1/n]/\bZ$ is annihilated by $n^k$ for some $k\in \bN$, so as $H_2(G,\bZ)$ is finitely generated, $\ob(u)$ is also annihilated by $n^k$ for some $k\in \bN$. 
\end{proof}
\begin{remark}\label{rmk:infdiscreteUHF}
   For $G=\bZ^2$ the obstruction of Proposition \ref{prop:UHFinfinite} is the unique obstruction to $(G,\omega)$-representations. This follows by using C$^*$-classification techniques. In this case $\omega\in H^2(\bZ^2,\bT)\cong \bT$. The existence of a $(G,\omega)$ representation on $M_{n^\infty}$ is equivalent to a $^*$-homomorphism
    \[
    C^*(\bZ^2,\omega)\cong \cA_{\theta}\longrightarrow M_{n^\infty}
    \]
    for $\theta\in [0,1]$ such that $e^{2\pi i\theta}=\omega$ with $\cA_{\theta}$ denoting the rotation algebra. By \cite[Corollary C]{classification1} there exists such a $^*$-homomorphism whenever $\theta=k/p_1^{r_1}p_2^{r_2}\ldots p_n^{r_n}$ for $k\in \bN$,  $r_i\in \bN$ and $p_i$ are primes dividing $n$.\footnote{One can easily construct such homomorphisms explicitly in some cases e.g. $n=2$ without need of such a deep result} Indeed this follows as the rotation algebra $\cA_{\theta}$ has K-theory $\bZ^2$ and any trace $\tau$ of $\cA_{\theta}$ evaluates as $\tau:(a,b)\in K_0(\cA_{\theta})=\bZ^2\rightarrow a+\theta b$. The second of these results only seems to be in the literature for irrational rotation algebras \cite{RI81}. However, the general case can be easily deduced from \cite[Lemma 1]{DEB84}. This condition on $\theta$ holds if and only if $e^{2\pi i \theta n^l}=1$ for some $l\in \bN$.

\end{remark}
We now turn to K-theoretic restrictions to the lifting obstructions of $G$-kernels that do not require the underlying C$^*$-algebra to have a trace. These are inspired by the restrictions to the lifting obstructions of $G$-kernel introduced in \cite{IZ23}.
\par Denote by $\widetilde{\cU}(A)$ the universal cover of $\cU(A)$ that has a concrete description as follows. Firstly denote by $C_{*}([0,1],A)$ the C$^*$-algebra of continuous paths in $A$ starting in $\bC$. Note that if $A$ is a separable C$^*$-algebra then $C_{*}([0,1],A)$ is also separable. Thus $\cU(C_{*}([0,1],A)$ is a Polish group. Denote its closed subgroup of unitary continuous paths starting at $1_A$ by $\cP_{\cU(A)}$ which is hence also a Polish group. Denote by $\Omega_{0}\cU(A)$ the subgroup of $\cP_{\cU(A)}$ consisting of the loops in $\cU(A)$ that are based homotopic to the constant loop at $1_A$. This is a closed subgroup of $\cP_{\cU(A)}$. The quotient group 
\begin{equation}
\widetilde{\cU}(A)=\cP_{\cU(A)}/\Omega_0\cU(A)
\end{equation}
is a model for the universal cover of $\cU(A)$ and it forms a Polish group. Consider the continuous surjection $\pi:\widetilde{\cU}(A)\rightarrow P\cU(A)$ defined by $\pi(f)=f(1)$ for a representative element $f\in \cP_{\cU(A)}$. We note that $\ker(\pi)$ coincides with the closed subgroup
\begin{equation}
    K_0^{\#}(A)=\{f\in \cU_{*}(C([0,1],A): f(1)\in \bT\}/\Omega_0\cU(A).
\end{equation}
It is shown in \cite[Lemma 3.5]{GIIZPE25} that the exact sequence
\[
0\rightarrow K_0^{\#}(A)\rightarrow \widetilde{\cU}(A)\xrightarrow{\pi} P\cU(A)\rightarrow 0
\]
is a central extension of topological groups apart from that $\pi$ is open. Note that by the short exact sequence $\pi$ induces a bijective continuous map $\widetilde{\cU}(A)/K_0^{\#}(A)\rightarrow P\cU(A)$. As both  $\widetilde{\cU}(A)/K_0^{\#}(A)$ and $P\cU(A)$ are Polish groups it follows from \cite[Theorem 14.12]{KE95} and Lemma \ref{lem:automaticcontinuity} that there is an isomorphism $\widetilde{\cU}(A)/K_0^{\#}(A)\cong P\cU(A)$.
\par Let $u$ be a projective representation of $G$ on $A$. Then by Lemma \ref{lem:lift} we may pick a Borel lift $\widetilde{u}:G\rightarrow \widetilde{\cU}(A)$ such that $\pi\circ\widetilde{u}=u$. Then as in Section \ref{subsec:projreps} the function $\lambda_{g,h}=\widetilde{u}_g\widetilde{u}_h\widetilde{u}_{gh}^*\in K_0^{\#}(A)$ is a Borel $2$-cocycle and its cohomology class is well-defined.
\begin{definition}
    Let $u:G\rightarrow P\cU(A)$ be a projective representation. We denote its associated class in $H^2_b(G,K_0^{\#}(A))$ by $\widetilde{\ob}(u)$
\end{definition}
As the projection map $\rho:\widetilde{\cU}(A)\rightarrow \cU(A)$ sending a representative function $f\in \widetilde{U}(A)$ to $f(1)$ is continuous, then any Borel lift $\widetilde{u}$ to $\widetilde{U}(A)$ gives rise to a Borel lift $\rho(\widetilde{u})$ to $U(A)$. Thus 
\begin{equation}\label{eqn:onstructiontilde}
    \rho_*(\widetilde{\ob}(u))=\ob(u)
\end{equation}
Note that by \cite[Theorem 5.3]{GIIZPE25}, if we let $A$ be unital, separable such that the canonical map $\pi_1(\cU(A))\rightarrow K_0(A)$ is surjective, then the obstruction $\widetilde{\ob}$ recovers $\ob_\tau$. Indeed, one has a surjection 
\[
p_2:\tilde{\cU}(A)\cong \bR\times \widetilde{S\cU}_\tau(A)\rightarrow S\cU_\tau(A)
\]
by projecting onto the endpoint for a given path in $S\cU_\tau(A)$. The surjection $p_2$ is such that the diagram
\[
\begin{tikzcd}
    \tilde{\cU}(A)\ar[r,"p_2"]\ar[d,"\pi"] & SU_\tau(A)\ar[ld]\\
    P\cU(A)
\end{tikzcd}
\]
commutes. Thus
\[
{p_2}_*(\widetilde{\ob}(u))=\ob_\tau(u)
\]
for any projective representation $u:G\rightarrow P\cU(A)$.
\begin{remark}\label{rmk:formofK0hash}
It is shown in the proof of \cite[Lemma 3.4 (i)]{IZ23} that there is a continuous surjective homomorphism $\pi_1(\cU(A))\times \bR\rightarrow K_0^{\#}(A)$ with kernel $\bZ[e_1,-1]$ where $e_1$ denotes the loop $e_1(t)=e^{2\pi i t}$. Thus it induces a bijective continuous map
\[
\frac{\pi_1(\cU(A))\times \bR}{\bZ(e_1,-1)}\rightarrow K_0^{\#}(A)
\]
Now as both the range and source are Polish groups it follows that the inverse is also bijective and hence this is an isomorphism of topological groups (\cite[Theorem 9.10,Theorem 14.12]{KE95}). Under this identification the map $\rho$ becomes
\begin{align*}
\rho:\frac{\pi_1(\cU(A))\times \bR}{\bZ(e_1,-1)}&\rightarrow \bT\\
(x,\lambda)&\mapsto e^{2\pi i \lambda}
\end{align*}
Moreover, by \cite{DiscreteFG}, $\pi_1(\cU(A))$ is discrete whenever $\cU(A)$ is semilocally simply connected. This property of $\cU(A)$ was already implicitly used in the model for $\widetilde{\cU}(A)$ but we prove it here for completeness. Let $u\in \cU(A)$ and let $\cV$ be the ball around $u$ of radius $1/2$ in $\cU(A)$.  Let $\xi(t)$ be a loop in $\cV$ based at $v$. Then
\[
\|v^*\xi(t)-1\|<\|\xi(t)-u\|+\|u-v\|<1
\]
for all $t\in[0,1]$. Thus $v^*\xi(t)$ defines a unitary element in the C$^*$-algebra
\[
C=\{f\in C([0,1],A):f(0)=f(1)\in \bC1_A\}
\]
which is in the open ball of radius $1$ around the unit. Thus 
\[
v^*\xi(t)=e^{2\pi i h_1(t)}\ldots e^{2\pi i h_n(t)}
\]
for $h_i\in C^{sa}$ for $1\leq i\leq n $. Then the homotopy
\[
H(s,t)=ve^{2\pi i s h_1(t)}\ldots e^{2\pi i s h_n(t)}
\]
is a homotopy from $\xi(t)$ to the constant path at $v$ and hence $\cU(A)$ is semilocally simply connected.
\end{remark}
The invariant $\widetilde{\ob}$ allows us to characterise the possible values of $\ob$ for finite group projective representations on the Cuntz-algebras $\mathcal{O}_n$ for $2\leq n \leq \infty$.
\begin{theorem}\label{thm:cuntz}
 Let $u:G\rightarrow P\cU(\mathcal{O}_{n+1})$ be a projective representation for $1\leq n< \infty$ then $\ob(u)\in nH^2_{b}(G,\bT)$. If $n=\infty$ then $\ob(u)\in kH^2_b(G,\bT)$ for all positive integers $k$. Moreover, if $G$ is finite and $\omega\in nH^2(G,\bT)$ then there exists a $(G,\omega)$-projective representation on $\cO_{n+1}$.
\end{theorem}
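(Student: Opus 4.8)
The plan is to read both halves of the statement off the K-theoretic invariant $\widetilde{\ob}$ of Section~\ref{sec:projrepCstar}, after making the relevant $K_0^\#$-group and the comparison map explicit via Remark~\ref{rmk:formofK0hash}; for the existence half one then feeds the answer back through the dictionary between $(G,\omega)$-representations and unital $^*$-homomorphisms out of $C^*(G,\omega)$, combined with a tensor-power construction built on Lemma~\ref{lem:projrepsfd}.

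For the obstruction I would first record the relevant K-theory of $\cO_{n+1}$: one has $K_0(\cO_{n+1})\cong\bZ/n\bZ$ for $n$ finite and $\cong\bZ$ for $n=\infty$, with $[1_{\cO_{n+1}}]$ a generator, and (since $\cO_{n+1}$ is properly infinite) the canonical map $\pi_1(\cU(\cO_{n+1}))\to K_0(\cO_{n+1})$ is an isomorphism carrying the class $e_1$ of the loop $t\mapsto e^{2\pi it}1$ onto $[1_{\cO_{n+1}}]$. Substituting this into the description of Remark~\ref{rmk:formofK0hash} gives
\[
K_0^\#(\cO_{n+1})\;\cong\;\frac{(\bZ/n\bZ)\times\bR}{\bZ(\bar1,-1)}\;\cong\;\bR/n\bZ
\]
for $n$ finite (and $\cong\bR$ for $n=\infty$), and under this identification $\rho$ becomes the canonical surjection $\bR/n\bZ\to\bR/\bZ=\bT$; rescaling $\bR/n\bZ\cong\bT$ turns $\rho$ into multiplication by $n$ on $\bT$, while for $n=\infty$ the map $\rho$ is the covering $\bR\to\bT$. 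Since $\rho_*(\widetilde{\ob}(u))=\ob(u)$ by \eqref{eqn:onstructiontilde}, for finite $n$ this yields $\ob(u)\in\im\big(H^2_b(G,\bT)\xrightarrow{\times n}H^2_b(G,\bT)\big)=nH^2_b(G,\bT)$; for $n=\infty$, factoring $\bR\xrightarrow{\rho}\bT$ as $\bR\xrightarrow{\times k}\bR\xrightarrow{\rho}\bT$ with $\times k$ an automorphism of $\bR$ gives $\ob(u)\in kH^2_b(G,\bT)$ for every positive integer $k$.

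For the existence statement, recall from the discussion preceding the theorem that a $(G,\omega)$-representation on $\cO_{n+1}$ is exactly a unital $^*$-homomorphism $C^*(G,\omega)\to\cO_{n+1}$, and that for finite $G$ the algebra $C^*(G,\omega)$ is a finite-dimensional $\bigoplus_{i}M_{d_i}$ with $\sum_i d_i^2=|G|$. I would then write $\omega=\nu^n$ with $\nu\in H^2(G,\bT)$ of finite order $m$ (possible since $\omega\in nH^2(G,\bT)$), choose by Lemma~\ref{lem:projrepsfd} a $\nu$-projective unitary $\widetilde v\colon G\to\cU(m^k)$ with $\widetilde v_g\widetilde v_h=\nu(g,h)\widetilde v_{gh}$, and form the $n$-fold tensor power $g\mapsto\widetilde v_g^{\otimes n}$; from $\widetilde v_g^{\otimes n}\widetilde v_h^{\otimes n}=\nu(g,h)^n\widetilde v_{gh}^{\otimes n}=\omega(g,h)\widetilde v_{gh}^{\otimes n}$ this is an $\omega$-projective representation on the matrix algebra $M_{(m^k)^n}$, i.e.\ a unital $^*$-homomorphism $C^*(G,\omega)\to M_{(m^k)^n}$. (One may equally run this argument inside $\cO_{n+1}$ itself, using $\cO_{n+1}^{\otimes n}\cong\cO_{n+1}$ by Kirchberg--Phillips, once a $\nu$-representation on $\cO_{n+1}$ is available.)

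The step I expect to be the real obstacle is transporting this representation into $\cO_{n+1}$ itself, i.e.\ producing a unital $^*$-homomorphism $\bigoplus_i M_{d_i}\to\cO_{n+1}$. By R\o rdam's criterion for unital inclusions of finite-dimensional algebras into unital purely infinite simple C$^*$-algebras, such a homomorphism exists precisely when $[1_{\cO_{n+1}}]$ lies in $\sum_i d_i\,K_0(\cO_{n+1})$; granting this, one chooses pairwise orthogonal projections $q_i\in\cO_{n+1}$ with $[q_i]\in d_iK_0(\cO_{n+1})$ and $\sum_i q_i=1$, and embeds each block $M_{d_i}$ unitally into the purely infinite simple corner $q_i\cO_{n+1}q_i$. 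Thus the crux of the argument is the representation-theoretic assertion that the hypothesis $\omega\in nH^2(G,\bT)$ makes $[1_{\cO_{n+1}}]=1\in\bZ/n\bZ$ realizable in this way, equivalently that $\gcd(d_1,\dots,d_\ell,n)=1$; I would attack this by describing $C^*(G,\omega)$ through the central extension of $G$ classified by $\omega$ — so that irreducible $\omega$-projective representations become irreducible representations of the extension on which the centre acts by a character whose order divides $n$ in the appropriate sense — and by using the tensor-power homomorphism of the previous paragraph, which already confines $\gcd(d_1,\dots,d_\ell)$ to a divisor of a power of $m$. Verifying this K-theoretic/combinatorial compatibility is the heart of the proof, and the same bookkeeping simultaneously pins down exactly when a $(G,\omega)$-representation fails to exist, in agreement with the obstruction established in the first half.
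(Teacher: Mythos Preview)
Your obstruction argument coincides with the paper's: compute $K_0^\#(\cO_{n+1})$ from Remark~\ref{rmk:formofK0hash}, identify $\rho$ with multiplication by $n$ on $\bT$ (or with $\exp\colon\bR\to\bT$ when $n=\infty$), and apply \eqref{eqn:onstructiontilde}. For the existence half the paper takes a different and shorter route: it never inspects the block dimensions $d_i$ of $C^*(G,\omega)$, but sets $r=\ord(\omega)$, argues that $\omega\in nH^2(G,\bT)$ forces $\gcd(r,n)=1$, deduces via Kirchberg--Phillips that $\cO_{n+1}\cong M_{r^\infty}\otimes\cO_{n+1}$ (since $\bZ/n\bZ$ is then uniquely $r$-divisible), and transports the $(G,\omega)$-representation on $M_{r^\infty}$ supplied by Theorem~\ref{thm:UHF} along the resulting unital inclusion $M_{r^\infty}\hookrightarrow\cO_{n+1}$. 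No embedding analysis of $C^*(G,\omega)$ is needed.

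Your instinct that the step $\gcd(d_1,\dots,d_\ell,n)=1$ is the real obstacle is correct, and your sketch does not overcome it: the tensor-power construction only bounds $\gcd(d_1,\dots,d_\ell)$ by a power of $m=\ord(\nu)$, which says nothing about coprimality with $n$ unless $\gcd(m,n)=1$, and nothing in $\omega=\nu^n$ forces that. In fact the claim fails. For $G=(\bZ/4\bZ)^2$, $n=2$, and $\omega$ the unique class of order~$2$ in $H^2(G,\bT)\cong\bZ/4\bZ$, one has $\omega\in 2H^2(G,\bT)$, while the relations $u_au_b=-u_bu_a$, $u_a^4=u_b^4=1$ make $u_a^2,u_b^2$ central and give $C^*(G,\omega)\cong M_2^{\oplus4}$; every $d_i=2$, so no unital $^*$-homomorphism into $\cO_3$ exists. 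Since here $r=\ord(\omega)=2$ as well, the paper's coprimality step $\gcd(r,n)=1$ is equally violated; the hypothesis $\omega\in nH^2(G,\bT)$ alone does not suffice for the existence clause as stated, so the gap you located cannot be filled by either route.
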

\begin{proof}
    Let $u:G\rightarrow P\cU(\cO_{n+1})$ for $n<\infty$ be a projective representation. The C$^*$-algebras $\mathcal{O}_{n+1}$ have connected unitary group by \cite{CU81}. Moreover by \cite{CU81} and \cite[Theorem 4.3]{THO91} there exists a pointed isomorphism $(\pi_1(\cU(\cO_{n+1}),e_1)\cong (\bZ_{n},[1])$. Thus it follows from Remark \ref{rmk:formofK0hash} that
    \[
    K_0^{\#}(\cO_{n+1})\cong \frac{\bZ_{n}\oplus \bR}{\bZ([1],-1)}\cong \bT
    \]
    with the second isomorphism of topological groups given by
    \begin{align*}
        \frac{\bZ_{n}\oplus \bR}{\bZ([1],-1)}&\rightarrow \bT\\
        ([k],\lambda)&\mapsto e^{\frac{2\pi i (\lambda+k)}{n}}.
    \end{align*}
    After identifying $K_0^{\#}(\cO_{n+1})$ with $\bT$ it follows from \eqref{eqn:onstructiontilde} that $\ob(u)$ is in the image of the map 
    \[
    H^2_{b}(G,\bT)\xrightarrow{\times n}H^2_{b}(G,\bT)
    \]
    as required.
    \par If instead $u:G\rightarrow PU(\mathcal{O}_\infty)$ then there exists a pointed isomorphism $(\pi_1(\cU(\cO_\infty)),e_1)\cong (\bZ,[1])$ and an isomorphism
    \begin{align*}
        K_0^{\#}(\cO_{\infty})\cong \frac{\bZ\oplus \bR}{\bZ(1,-1)}&\cong \bR\\
        (k,\lambda)&\mapsto k+\lambda.
    \end{align*}
    After identifying $K_0^{\#}(A)$ with $\bR$ it follows from \eqref{eqn:onstructiontilde} that $\ob(u)$ is in the image of the map 
    \[
        H^2_b(G,\bR)\xrightarrow{\operatorname{exp}_*} H^2_b(G,\bT)
    \]
     where $\operatorname{exp}:\bR\rightarrow \bT$ is the exponential map $\operatorname{exp}(\lambda)=e^{2\pi i \lambda}$. As one can always choose a Borel measurable branch of the logarithm any cocycle in the image of $\operatorname{exp}_*:H^2(G,\bR)\rightarrow H^2(G,\bT)$ satisfies the required condition. 
    \par Now suppose that $G$ is finite and $\omega\in nH^2(G,\bT)$. Let $\eta\in H^2(G,\bT)$ such that $n\eta=\omega$ and $r$ be the order of $\omega$ in $H^2(G,\bT)$. Then one has that $r$ divides both
    $\frac{|G|}{\gcd(|G|,n)}$ and $|G|$ (the second as a consequence of \cite[III. Corollary 10.2]{BRO82}) so $\gcd(r,n)=1$. As $r$ and $n$ are coprime then $K_0(\mathcal{O}_{n+1})=\bZ_n$ is a uniquely $r$ divisible group and so $K_0(\cO_{n+1})\cong K_0(\cO_{n+1}\otimes M_{r^\infty})$ (see for example the proof of \cite[Corollary 5.4.4]{thesis:sergio}). It follows from the Kirchberg--Phillips classification theorem (\cite{phillipsclass},\cite{KI95}) that $M_{r^\infty}\otimes \mathcal{O}_{n+1}\cong \mathcal{O}_{n+1}$ and there exists a unital embedding 
    \[
    M_{r^\infty}\rightarrow \mathcal{O}_{n+1}.
    \]
    By Theorem \ref{thm:UHF} there exists a $(G,\omega)$-projective representation on $M_{r^\infty}$ so there also exists one on $\mathcal{O}_{n+1}$.
\end{proof}
For finite groups $G$ the cohomology group $H^2(G,\bT)$ is finite. Thus we have the following immediate consequence of Theorem \ref{thm:cuntz}.
\begin{corollary}\label{cor:oinfinity}
    Let $G$ be a finite group. Then any projective representation of $G$ on $\cO_\infty$ lifts to a unitary representation on $\cO_{\infty}$. 
\end{corollary}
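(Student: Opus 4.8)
The plan is to read the statement off the $n=\infty$ case of Theorem~\ref{thm:cuntz} together with the finiteness of $H^2(G,\bT)$. Let $u\colon G\to P\cU(\cO_\infty)$ be a projective representation. Since $G$ is finite it is in particular discrete, so $H^2_b(G,\bT)=H^2(G,\bT)$, and this abelian group is finite; set $N=|H^2(G,\bT)|$. Theorem~\ref{thm:cuntz} gives $\ob(u)\in kH^2_b(G,\bT)$ for every positive integer $k$, so in particular $\ob(u)\in NH^2(G,\bT)$. As $N$ annihilates the finite abelian group $H^2(G,\bT)$, this forces $\ob(u)=0$; equivalently, $\ob(u)$ lies in $\bigcap_{k\ge 1}kH^2(G,\bT)$, the subgroup of infinitely divisible elements, which is trivial for a finite abelian group.

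Once $\ob(u)$ is trivial, the fact that $u$ lifts is precisely the discussion following the definition of $\ob$ in Section~\ref{subsec:projreps}: choosing a Borel lift $\tu\colon G\to\cU(\cO_\infty)$ and a Borel function $\mu\colon G\to\bT$ with $\tu_g\tu_h\tu_{gh}^*=\mu_g\mu_h\overline{\mu_{gh}}$, the map $g\mapsto\tu_g\overline{\mu_g}$ is a Borel group homomorphism lifting $u$, hence continuous by Lemma~\ref{lem:automaticcontinuity} since $G$ and $\cU(\cO_\infty)$ are Polish. This yields the desired genuine unitary representation on $\cO_\infty$.

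There is essentially no obstacle here: the only point worth isolating is that the conclusion ``$\ob(u)\in kH^2(G,\bT)$ for all $k$'' forces $\ob(u)=0$, which is immediate from the finiteness of $H^2(G,\bT)$ recalled just before the statement. As an alternative that bypasses the ``for all $k$'' formulation, one can argue directly for finite $G$: from \eqref{eqn:onstructiontilde} and the identification $K_0^{\#}(\cO_\infty)\cong\bR$ one obtains $\ob(u)\in\operatorname{im}\big(\exp_*\colon H^2_b(G,\bR)\to H^2_b(G,\bT)\big)$, and $H^2(G,\bR)=0$ for finite $G$ since $\bR$ is uniquely divisible, so $\ob(u)=0$ at once.
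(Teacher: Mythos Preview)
Your argument is correct and matches the paper's approach exactly: the corollary is stated as an immediate consequence of Theorem~\ref{thm:cuntz} together with the finiteness of $H^2(G,\bT)$, and you have simply written out the details (including the alternative via $H^2(G,\bR)=0$, which is the same mechanism underlying the proof of Theorem~\ref{thm:cuntz} in the $n=\infty$ case).
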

However, for infinite groups there are many non-trivial projective representations on $\cO_\infty$.
\begin{proposition}\label{prop: Oinftgrp}
    Let $\omega\in H^2(\bZ^n,\bT)$ then there is a $(\bZ^n,\omega)$ representation on $\cO_\infty$.
\end{proposition}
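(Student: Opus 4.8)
The plan is to use the correspondence recalled just before Remark~\ref{rmk:infdiscreteUHF}: for a discrete group, a $(\bZ^n,\omega)$-representation on $\cO_\infty$ exists if and only if there is a unital $^*$-homomorphism $C^*(\bZ^n,\omega)\to\cO_\infty$. So I would reduce the proposition to constructing such a $^*$-homomorphism, and obtain it from the existence half of the Kirchberg--Phillips classification. First I would record the relevant properties of the noncommutative $n$-torus $A:=C^*(\bZ^n,\omega)$. Since $\bZ^n$ is amenable, $A$ is unital, separable and nuclear, and $C^*(\bZ^n,\omega)=C^*_r(\bZ^n,\omega)$; realising $A$ as an iterated crossed product of $C^*(\bZ)\cong C(\bT)$ by $\bZ$ shows that it lies in the UCT bootstrap class. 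Finally, by the (iterated Pimsner--Voiculescu) computation of the $K$-theory of noncommutative tori, $K_0(A)$ and $K_1(A)$ are free abelian of rank $2^{n-1}$ and the class $[1_A]\in K_0(A)$ is primitive; for $n=2$ this is already implicit in Remark~\ref{rmk:infdiscreteUHF}, where $K_0(\cA_\theta)=\bZ^2$, $[1]=(1,0)$, and the trace is $(a,b)\mapsto a+\theta b$.

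With these facts in hand the rest is formal. Since $K_*(A)$ is free abelian, the UCT exact sequence degenerates to a natural isomorphism $KK(A,\cO_\infty)\cong\Hom(K_*(A),K_*(\cO_\infty))$. As $K_0(\cO_\infty)=\bZ$ with $[1_{\cO_\infty}]=1$, $K_1(\cO_\infty)=0$, and $[1_A]$ is primitive, there is a group homomorphism $K_0(A)\to\bZ$ sending $[1_A]$ to $1$; together with the zero map on $K_1$ this produces a class $\eta\in KK(A,\cO_\infty)$ whose induced map $K_0(A)\to K_0(\cO_\infty)$ sends $[1_A]$ to $[1_{\cO_\infty}]$. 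I would then invoke the existence statement for unital $^*$-homomorphisms into unital Kirchberg algebras (\cite{phillipsclass,KI95}, see also \cite{classification1}): for a unital separable nuclear C$^*$-algebra in the UCT class and a unital Kirchberg algebra $B$, every element of $KK(A,B)$ whose induced map sends $[1_A]$ to $[1_B]$ is realised by a unital $^*$-homomorphism $A\to B$. Applied with $B=\cO_\infty$ and the class $\eta$, this yields the required unital $^*$-homomorphism $C^*(\bZ^n,\omega)\to\cO_\infty$, and hence a $(\bZ^n,\omega)$-representation on $\cO_\infty$.

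The point that genuinely needs care is the primitivity of $[1_A]$ in $K_0(A)$: this is precisely what makes the necessary condition of Theorem~\ref{thm:cuntz} not merely satisfied but also sufficient here. Indeed, for $G=\bZ^n$ the map $H^2(\bZ^n,\bR)\to H^2(\bZ^n,\bT)$ induced by $\bR\to\bT$ is surjective, so Theorem~\ref{thm:cuntz} imposes no constraint at all on $\omega$, and primitivity of $[1_A]$ is the $K$-theoretic shadow of this. Everything else --- nuclearity, the UCT, the collapse of the UCT sequence, and the quoted existence theorem --- is routine; in particular no trace-type compatibility is needed, since $\cO_\infty$ is purely infinite and hence admits no tracial states.
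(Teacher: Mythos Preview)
Your proposal is correct and follows essentially the same route as the paper: both reduce the problem to producing a unital $^*$-homomorphism from the noncommutative $n$-torus $C^*(\bZ^n,\omega)$ into $\cO_\infty$, use that $K_0$ is free abelian with $[1_A]$ primitive, and then invoke an existence theorem for unital maps into Kirchberg algebras (the paper cites \cite{GA24} where you cite Kirchberg--Phillips directly). The only cosmetic difference is that you spell out nuclearity, the UCT via iterated crossed products, and the degeneration of the UCT sequence, whereas the paper simply asserts these and quotes \cite{ELL84} for the $K$-theory computation.
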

\begin{proof}
    The C$^*$-algebra $A$=C$^*(\bZ^n,\omega)$ is a non-commutative $n$-torus as consequence of \cite[Lemma 5]{THO87}. The result now follows as every non-commutative $n$-torus admits a unital $^*$-homomorphism into $\cO_\infty$. Indeed, it is shown in \cite[Theorem 2.2]{ELL84} that $K_0(A)$ isomorphic to $\bZ^{2^{n-1}}$ with $[1_A]$ being identified with $(1,0,\ldots,0)$. Note that there is a pointed map of abelian groups $(\bZ^{2^{n-1}},(1,0,\ldots,0))\rightarrow (\bZ,1)$. Thus, as $A$ is nuclear and satisfies the universal coefficient theorem, it follows as a consequence of \cite[Theorem A]{GA24} that there is a unital $^*$-homomorphism $A\rightarrow \cO_\infty$. 
\end{proof}
On the opposing side to $\cZ$ we have $\cO_2$, for which there are no K-theoretic restrictions to lifting obstructions of projective representations. In fact any circle valued $2$-cocycle of a discrete exact group arises as the lifting obstruction of a projective representation on $\cO_2$. To show this we first require the following observation.
\begin{remark}\label{rmk:exactness}
    Whenever $G$ is a discrete exact countable group and $\omega\in H^2(G,\bT)$ then $C_r^*(G,\omega)$ is exact. Indeed, to see this consider the associated central extension 
    \[
    N\rightarrow G\times_{\omega} N\rightarrow G
    \]
    where $N$ is the countable abelian subgroup of $\bT$ generated by the image of $\omega$ and $G\times_{\omega}N$ consists of pairs $(g,\lambda)$ for $g\in G$ and $\lambda\in N$ with the product
    \[
    (g,\lambda)(h,\mu)=(gh,\omega(g,h)\lambda\mu).
    \]
    Both $G$ and $N$ are exact and thus $G\times_{\omega} N$ is exact. As $C_r^*(G,\omega)\subset C_r^*(G\times_{\omega} N)$ the C$^*$-algebra $C_r^*(G,\omega)$ is also exact.
\end{remark}
It follows from the $\cO_2$ embedding theorem (\cite{KI95}) and Remark \ref{rmk:exactness}  that for any discrete exact countable group $G$ and $\omega\in H^2(G,\bT)$ there is a unital embedding 
\[
C^*_{r}(G,\omega)\rightarrow \cO_2.
\]
which gives the existence of a $(G,\omega)$ representation on $\cO_2$. 
\begin{remark}\label{rmk:O2}
The example above illustrates that the K-theoretic restrictions to the lifting obstructions of projective representations lie in the position of $[1_A]$ inside $K_0(A)$. A unital Kirchberg algebra $A$ is said to be in \emph{Cuntz standard form} if  $[1_A]=0$ in $K_0(A)$ (or equivalently if there is a unital embedding of $\cO_2$ into $A$). Therefore, for any discrete exact countable group $G$, $\omega\in H^2(G,\bT)$ and Kirchberg algebra $A$ in Cuntz standard form, there is a $(G,\omega)$ projective representation on $A$. So, even though in the case of $\cO_\infty$ every projective representation of a finite group $G$ lifts to a genuine representation, its standard form $\cO_\infty^{st}$, which is Morita equivalent to $\cO_\infty$, admits $(G,\omega)$ representations for any $\omega$.
\end{remark}
\section{A 2-cocycle that does not arise on $\cR$}
As argued in the introduction, any circle valued $2$-cocycle of a discrete amenable group arises as the lifting obstruction of a projective representation on $\cR$. Although K-theory does not cause any restrictions to the possible lifting obstructions for unitary representations on $\cR$, there are other restrictions of representation theoretic nature.
\begin{theorem}\label{thm:propertyT}
    Let $G=Sp(2n,\bZ)$ for $n\geq 2$, then there is a $2$-cocycle $\omega\in H^2(Sp(2n,\bZ),\bT)$ such that there is no $(G,\omega)$ representation on $\cR$.
\end{theorem}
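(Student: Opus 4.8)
The plan is to find an $\omega\in H^2(G,\bT)$ of infinite order for which no $(G,\omega)$ representation on $\cR$ exists, using operator‑algebraic superrigidity: such a representation would have to embed the twisted group von Neumann algebra $L(G,\omega)$ — which is non‑injective — into $\cR$.

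I would first construct the cocycle. For $n\geq2$ the group $G=Sp(2n,\bZ)$ is a higher‑rank lattice in the non‑linear Lie group $\widetilde{Sp(2n,\bR)}$, whose center $\langle z\rangle$ is infinite cyclic; writing $\widehat G$ for the preimage of $G$ in $\widetilde{Sp(2n,\bR)}$, the associated central extension $1\to\langle z\rangle\to\widehat G\to G\to 1$ has infinite order in $H^2(G,\bZ)$, equivalently $H_2(Sp(2n,\bZ),\bZ)$ has positive free rank — so by the universal coefficient isomorphism \eqref{eqn:UCTidentification}, $H^2(G,\bT)\cong\Hom_{\bZ}(H_2(G,\bZ),\bT)$ has elements of infinite order (this fails for $SL_m(\bZ)$). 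Concretely, fix an irrational $\theta$ and let $\chi_0\colon\langle z\rangle\cong\bZ\to\bT$ be $z\mapsto e^{2\pi i\theta}$; taking $c\in Z^2(G,\langle z\rangle)$ representing the extension and $\omega:=\chi_0\circ c\in Z^2(G,\bT)$, the class $[\omega]$ has infinite order, and $\chi_0$ has infinite order as a character of $\langle z\rangle\cong\bZ$.

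Now I would argue by contradiction. Since $G$ is discrete, a $(G,\omega)$ representation on $\cR$ is exactly a unital $^*$‑homomorphism $C^*(G,\omega)\to\cR$; composing with the trace of $\cR$ yields a trace $\varphi$ on $C^*(G,\omega)$, and the von Neumann algebra $N\subseteq\cR$ generated by the image of $C^*(G,\omega)$ is a von Neumann subalgebra of $\cR$, hence injective (and so hyperfinite by Connes). Regarding $\varphi$ as a character of $\widehat G$, it restricts to $\chi_0$ on $\langle z\rangle$. I would then invoke Peterson‑type character rigidity: since $\widehat G$ is a higher‑rank lattice, every extremal character of $\widehat G$ either factors through a finite‑dimensional representation or is a regular character twisted by a character of the center. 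Decomposing $\varphi$ into extremal characters and using that $\chi_0$ is an extreme point among normalized positive‑definite functions on $\langle z\rangle$, almost every extremal component still restricts to $\chi_0$ there; such a component cannot factor through a finite‑dimensional representation, for that would yield a finite‑dimensional projective representation of $G$ with cocycle $\omega$, whence — pulling back the $\bZ_d$‑cover $SU(d)\to P\cU(d)$ — $[\omega]$ would lie in the image of $H^2(G,\bZ_d)\to H^2(G,\bT)$ and be $d$‑torsion, contradicting our choice. Therefore $\varphi$ is the $\chi_0$‑twisted regular character, i.e.\ $\varphi(u_g)=\delta_{g,e}$, and $N\cong L(G,\omega)$.

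To finish, $L(G,\omega)$ is not injective, since $G$ is non‑amenable — indeed, as $G$ has property (T), $L(G,\omega)$ has property (T) as a von Neumann algebra — which contradicts $N\subseteq\cR$. I expect the character‑rigidity step to be the main obstacle: it must be applied to the central extension $\widehat G$, a lattice in the \emph{non‑linear} higher‑rank group $\widetilde{Sp(2n,\bR)}$ rather than in a linear algebraic group, so one needs the Peterson‑type description of characters in that generality — which ought to follow from the Poisson‑boundary argument, as that is unaffected by passing to a central extension. The remaining steps — excluding finite‑dimensional projective representations of an infinite‑order cocycle by a determinant/cohomology argument, and the non‑injectivity of $L(G,\omega)$ — are routine.
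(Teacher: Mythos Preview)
Your approach is genuinely different from the paper's, and the two produce \emph{different} obstructed cocycles.

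The paper does not use character rigidity at all. It takes Deligne's construction of a central extension $\bZ_3\to E\to Sp(2n,\bZ)$ with $E$ not residually finite, lets $\omega$ be the associated (hence \emph{torsion}) class in $H^2(Sp(2n,\bZ),\bT)$, and argues directly: from a $(G,\omega)$ representation on $\cR$ one manufactures, by tensoring with a faithful linear representation, a faithful unitary embedding $E\hookrightarrow\cU(\cR)$. Since $E$ has property~(T) but is not residually finite, this contradicts Kirchberg's theorem that any countable property~(T) subgroup of $\cU(\cR)$ is residually finite. No analysis of traces on $C^*(G,\omega)$, no decomposition into extremal characters, and no appeal to non-injectivity of $L(G,\omega)$ is needed.

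Your route instead picks an \emph{infinite-order} $\omega$ coming from the metaplectic extension, and then runs a character-rigidity argument on the lattice $\widehat G\subset\widetilde{Sp(2n,\bR)}$ to force the trace pulled back from $\cR$ to be the canonical one, yielding an embedding $L(G,\omega)\hookrightarrow\cR$ and a contradiction with property~(T). The structural outline is sound, and your exclusion of finite-dimensional components via the determinant is correct. The genuine cost is the step you flag yourself: you need character rigidity for lattices in the \emph{non-linear} group $\widetilde{Sp(2n,\bR)}$. This is plausibly covered by the Boutonnet--Houdayer stationary-character framework, but it is not the standard Peterson statement for algebraic groups, and you would need to pin down a precise reference or check that the proof carries over; ``the Poisson-boundary argument is unaffected'' is a heuristic, not a citation. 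By contrast, the paper's argument rests only on Deligne (1978) and Kirchberg (1994), both classical.

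In short: both arguments work in principle, but they obstruct different cocycles (torsion versus non-torsion) and rely on different deep inputs. The paper's is shorter and more self-contained; yours, if the rigidity input is secured, would additionally show that the obstructed class can be taken of infinite order.
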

\begin{proof}
    Let $Q=Sp(2n,\bZ)$ for $n\geq 2$. It follows from \cite{DE79} that there is a central extension
    \[
    \bZ_3\rightarrow G\rightarrow Q
    \]
    with $G$ not residually finite. Indeed, Deligne shows that there is a central extension $E$ of $Q$ with centre $\langle a\rangle = \bZ$ such that $a^2$ is contained in the kernel of any homomorphism from $E$ to a finite group. The quotient of this extension by $a^3$ is the desired extension.  By standard group cohomology there is a $2$-cocycle $\eta\in Z^2(Q,\bZ_3)$ such that $G\cong Q\times_{\eta}\bZ_3$ (using the notation of Remark \ref{rmk:exactness}). The centre of $Q$ is isomorphic to $\bZ_2=\{I_{2n},-I_{2n}\}$. As we have that $H^2(\bZ_2,\bZ_3)=0$ we may tweak $\eta$ by a coboundary to get a cocycle $\ti{\eta}:Q\rightarrow \bZ_3$ such that $\ti{\eta}|_{Z(Q)}=1$. Moreover, we still have an isomorphism $G\cong Q\times_{\tilde{\eta}}\bZ_3$. Let $\omega=\iota_*\tilde{\eta}\in H^2(Q,\bT)$ with $\iota:\bZ_3\rightarrow \bT$ the inclusion. 
    \par Suppose there exists a $(Q,\omega)$ representation 
    $u$ on $\cR$. Pick a set theoretic lift $\tilde{u}$ to $\cU(\cR)$ which satisfies
    \[\tilde{u}_g\tilde{u}_h=\tilde{\eta}(g,h)\tilde{u}_{gh}.
    \]
    As $Q$ is a linear group there is a faithful unitary representation $v$ of $Q$ on $\cR$ whose image intersects $\bT$ only when evaluated at $Z(Q)$. By replacing $\tilde{u}$ with $\phi(\tilde{u}\otimes v)$ for an isomorphism $\phi: \cR\overline{\otimes} \cR\rightarrow \cR$, we may assume that $\tilde{u}:Q\rightarrow \cU(\cR)$ is faithful and intersects $\bT$ only when evaluated at $Z(Q)$. The element $z=I_{2n}$ is of order $2$, so we have the identity
    \[
    \tilde{u}_z^2=\tilde{\eta}(z,z)=1
    \]
     and $\tilde{u}_z=-1$. Thus the mapping
    \begin{align*}
    \Phi:G\cong Q\times_{\tilde{\eta}} \bZ_3&\rightarrow \cU(\cR)\\
    (g,\lambda)&\mapsto \tilde{u}_g\iota(\lambda).
    \end{align*}
    is a faithful unitary representation. The group $Q$ has property (T) (see \cite[Section 1.7]{PropertyT}). As property (T) is preserved by extensions, $G$ also has property (T) (\cite[Section 1.7]{PropertyT}). Thus $\Phi$ is a faithful unitary representation of a countable property (T) group which is not residually finite into the unitary group of $\cR$. This gives a contradiction by \cite[Corollary 1.2]{KI94}.
\end{proof}
\bibliographystyle{plain}
\bibliography{projreps}
\end{document}